\newcommand{\dz}{\partial_z}
\newcommand{\eps}{\varepsilon}
\newcommand{\bU}{\overline{U}}
\newcommand{\cC}{{\mathcal C}}
\newcommand{\cL}{{\mathcal L}}
\newcommand{\cT}{{\mathcal T}}
\newcommand{\cc}{\mbox{\textnormal c.c.}}
\newcommand{\cst}{\mbox{\textnormal Cst }}
\newcommand{\dt}{\partial_t}
\newcommand{\dsp}{\displaystyle}
\newcommand{\C}{{\mathbb C}}
\newcommand{\R}{{\mathbb R}}
\newcommand{\N}{{\mathbb N}}
\newcommand{\T}{{\mathbb T}}
\newcommand{\Z}{{\mathbb Z}}
\newcommand{\init}{_{\vert_{t=0}}}
\newcommand{\uk}{\underline{{\bf k}}}
\newcommand{\uom}{\underline{\omega}}
\newcommand{\bk}{{\mathbf k}}
\newcommand{\curl}{\mbox{\textnormal{curl} }}
\newtheorem{assu}{Assumption}
\newtheorem{prop}{Proposition}
\newtheorem{coro}{Corollary}
\newtheorem{lemm}{Lemma}
\newtheorem{theo}{Theorem}
\newtheorem{exam}{Example}
\newtheorem{rema}{Remark}
\begin{document}
\title{Short Pulses Approximations in Dispersive Media}
\author[M. Colin]{Mathieu Colin}
\address{Universit\'e Bordeaux I; IMB et CNRS UMR 5251\\
351 Cours de la Lib\'eration,\\33405 Talence Cedex, France}
\email{mcolin@math.u-bordeaux1.fr}
\author[D. Lannes]{David Lannes}
\address{Universit\'e Bordeaux I; IMB et CNRS UMR 5251\\
351 Cours de la Lib\'eration,\\33405 Talence Cedex, France}
\email{lannes@math.u-bordeaux1.fr}
\thanks{This work was supported by the ACI Jeunes Chercheurs et Jeunes Chercheuses ``Dispersion et nonlin\'earit\'es''}
\begin{abstract}
We derive various approximations for the solutions of nonlinear hyperbolic
systems with fastly oscillating initial data. We first provide error
estimates for the so-called slowly varying envelope, full dispersion, and 
Schr\"odinger
approximations in a Wiener algebra; this functional framework 
allows us to give precise
conditions on the validity of these models; we give in particular a 
rigorous proof of the ``practical rule'' which serves as a criterion for
the use of the slowly varying envelope approximation (SVEA). We also
 discuss the extension of these models
to short pulses and more generally to large spectrum waves, such as chirped
pulses. We then derive and justify rigorously 
a modified Schr\"odinger equation with improved
frequency dispersion. Numerical computations are then presented, which
confirm the theoretical predictions.

\end{abstract}

\maketitle
\section{Introduction}

\subsection{General setting}

The nonlinear Schr\"odinger equation has been derived as an asymptotic 
model for many physical problems involving the propagation of slowly
modulated oscillating plane waves. Typical examples are water-waves 
\cite{HO,Zakharov}
or ferromagnetism \cite{LM} and of course nonlinear optics 
(see e.g. \cite{Boyd,NM})
for which it plays a central role. It is aimed at approximating the 
solution $u$ of a nonlinear hyperbolic system with fast oscillating initial
condition, say, 
\begin{equation}\label{eqd}
	\left\lbrace
	\begin{array}{l}
	\dsp\dt u+ A(\partial)u+\frac{1}{\eps}Eu=\eps F(u),\vspace{1mm}\\
	\dsp u_{\vert_{t=0}}=U^0(x)e^{i\frac{{\bf k}\cdot x}{\eps}}+\cc;
	\end{array}\right.
\end{equation}
here $\eps\ll 1$ is a small parameter corresponding to the wavelength of
the oscillations in dimensionless variables (see Assumption
\ref{as1} below for more precisions). The so-called
Schr\"odinger approximation can be decomposed into two steps:
\begin{enumerate}
	\item[-a-] 
	\emph{Slowly Varying Envelope Approximation (SVEA)}: one writes 
	the solution $u$ as the product of a fast oscillating wave train
	and an envelope $U$:
	\begin{equation}\label{appr}
		u(t,x)\sim U(t,x)
	e^{i\frac{{\bf k}\cdot x-\omega({\bf k}) t}{\eps}}+\cc,
	\end{equation}
	where $(\omega({\bf k}),{\bf k})$ solves the \emph{dispersion relation}
	(or, equivalently, belongs to the 
	\emph{characteristic variety}, see (\ref{db}) below).
	\item[-b-] The envelope $U(t,x)$ is approximated by the solution of the
	nonlinear Schr\"odinger equation
	\begin{equation}\label{appr1}
	 \dt U+({\bf c_g}\cdot \nabla) U
	-\eps\frac{i}{2} {\mathcal R}(\partial,\partial) U
	=\eps \widetilde{F}(U),
	\qquad U_{\vert_{t=0}}=U^0,
	\end{equation}
	where the group velocity ${\bf c_g}$, the second order differential
operator ${\mathcal R}(\partial,\partial)$ and the nonlinearity
$\widetilde{F}$ can be explicitly given in terms of the data.
\end{enumerate}

The Schr\"odinger approximation (\ref{appr})-(\ref{appr1}) has
been rigorously justified \cite{DJMR,JMR,Lannes} for times of order
$O(1/\eps)$ in the usual
situation where the typical scale for the space variations of $U^0$
is of order $1$. This means that the number of oscillations (or optical
cycles) in the laser pulse is of order $O(1/\eps)$. Recently, however, 
lasers with ultrashort pulses have been developed, for which the number
of optical cycles is much smaller; for such pulses, the Schr\"odinger
equation proves completely inaccurate and various authors proposed other
ways of describing the asymptotics of (\ref{eqd}) when $\eps\to 0$. Some
of these results are briefly recalled below; their common point is that they
all abandon the \emph{SVEA}, because the widely accepted
``practical rule'' 
\begin{equation}\label{thumb}
	\mbox{The SVEA (\ref{appr}) is valid 
	if } \vert \nabla U^0\vert_\infty\ll \frac{1}{\eps}
\end{equation}
is enforced when the pulses get very small.\\
Alterman and Rauch \cite{AR1,AR2,AR3} modeled short pulses by replacing the
fast oscillating term in the initial condition 
by a fast decaying one; more precisely,
they modified the initial condition for (\ref{eqd}) as follows:
\begin{equation}\label{ICAR}
	u_{\vert_{t=0}}=U^0(x)e^{i\frac{{\bf k}\cdot x}{\eps}}+\cc
	\quad \leadsto \quad
	u_{\vert_{t=0}}= U^0(x,\frac{{\bf k}\cdot x}{\eps}),
\end{equation}
with $U^0(x,z)\to 0$ as $z\to \infty$, and the SVEA
(\ref{appr}) is consequently replaced by
$$
	u(t,x)\sim U\big(t,x,\frac{{\bf k}\cdot x-\omega({\bf k})t}{\eps}\big),
$$
with $U(t,x,z)\to 0$ as $z\to\infty$. The Schr\"odinger equation 
(\ref{appr1}) is then replaced by
\begin{equation}\label{AlR}
	\dt\dz U+({\bf c_g}\cdot \nabla) \dz U
	+\eps\frac{1}{2} {\mathcal R}(\partial,\partial) U
	=\eps \dz \widetilde{F}(U),
	\qquad U_{\vert_{t=0}}=U^0;
\end{equation}
this approximation (rigorously justified) uses the fact that the group 
velocity ${\bf c_g}$
does not depend on $\vert {\bf k}\vert$ and is therefore only valid 
in \emph{nondispersive media} ($E=0$ in (\ref{eqd})). 
Alterman and Rauch's approach has been generalized in \cite{CJSW,SW} taking
into account the particularities of the optical susceptibility of
some cubic nonlinear media such as silica, and finally obtaining
a \emph{quasilinear} variant of (\ref{AlR}), which is rigorously
justified in the linear case.\\
In order to model the propagation of ultrashort pulses in
\emph{dispersive} media, Barrailh and Lannes \cite{BL} chose another
approach based on the functional tools developed in \cite{Lannes-sp}, which
consist in replacing the initial condition for (\ref{eqd}) as follows
$$
	u_{\vert_{t=0}}=U^0(x)e^{i\frac{{\bf k}\cdot x}{\eps}}+\cc
	\quad \leadsto \quad
	u_{\vert_{t=0}}= U^0(x,0,\frac{{\bf k}\cdot x}{\eps}),
$$
where the Fourier transform of the initial profile $U^0(x,T,Z)$ with
respect to $T$ and $Z$ is an \emph{$H^s$-valued measure of bounded variation}.
This general framework allows one to consider initial data of
the form (\ref{eqd}) --for which the bounded variation measure is
obviously $U^0(x)\delta_{(\omega({\bf k}),{\bf k})}$-- and of the
form (\ref{ICAR}). A generalization of (\ref{AlR}) is then derived and
rigorously justified; this equation however has the drawback of being
quite complicated because the transport operator $\dt+{\bf c_g}\cdot\nabla$
must be replaced by a nonlocal operator modeling the fact that
the group velocity ${\bf c_g}$ depends on the frequency in dispersive
media.

An important characteristic of ultrashort pulses that we did not mention 
so far is that their frequency spectrum is broad (while for usual wave
packets as the initial condition of (\ref{eqd}), and taking the Fourier 
transform with respect to the fast scale $x/\eps$, it is essentially
contained in a $O(\eps)$ neighborhood of ${\bf k}$; see for instance \cite{BF}). Since
the dispersion relation of the Schr\"odinger equation (\ref{appr1})
is a second  order 
Taylor expansion of the exact dispersion relation of (\ref{eqd}) at
${\bf k}$, the error is quite important if frequencies far from ${\bf k}$
must be taken into account; in addition to the violation of the 
practical rule (\ref{thumb}), this is another reason why the Schr\"odinger
approximation breaks down for short pulses. This phenomenon is not specific
to short pulses since it occurs for all pulses with large frequency spectrum
(typical examples are \emph{chirped pulses}).\\
An alternative way to replace the NLS approximation for such pulses is 
therefore to focus on the dispersive properties of the asymptotic model.
Instead of abandoning the SVEA (\ref{appr}) as in \cite{AR1,AR2,AR3,SW,CJSW,BL}, various authors \cite{BK,CGL} chose to make this approximation but
kept the full dispersive properties of the original equations (\ref{eqd}), thus
avoiding the disastrous (for large spectrum pulses in dispersive media)
second order Taylor expansion of the dispersion
relation. Consequently, the approximations thus obtained
might have a slightly smaller range of validity, but are undoubtly simpler
and moreover provide a more precise approximation ($O(\eps)$ versus $o(1)$).
For instance, the equation derived in \cite{CGL}, and which we call
\emph{full dispersion model} here, reads
$$
	\dt U+\frac{i}{\eps}(\omega({\bf k}+\eps D)-\uom)U
	=\eps \widetilde{F}(U),\qquad
	U(x)=U^0(x),
$$
where $\omega(\cdot)$ parameterizes the graph of the relevant sheet of the
characteristic variety (quite obviously (\ref{appr1}) can be deduced from
this equation by Taylor expanding $\omega(\cdot)$ at ${\bf k}$). This
model is much simpler than the one derived in \cite{BL} and furnishes
very satisfactory results; however it is still a nonlocal equation
and its resolution requires spectral methods. One
of the goals of this paper is to derive a new approximation, with, in practical, the same dispersive 
qualities as the full dispersion model, but keeping the same
level of complexity as the usual Schr\"odinger equation (\ref{appr1}).

\bigbreak

Before describing with more details the results of this paper, let us
introduce here two kinds of initial data for (\ref{eqd}), which
we will often refer to throughout the article:
\begin{itemize}
\item Short-pulses: the initial profile $U^0(x)$ in (\ref{eqd}) is taken
of the form
\begin{equation}\label{IC}
	U^0(x)=f(\frac{x-x_0}{\beta}),
\end{equation}
with $0<\beta\leq 1$ and $f$ a smooth function; 
the case $\beta=1$ corresponds to  classical
laser pulses, and short pulses to $\beta\ll 1$ (see Fig. \ref{figpulse});
the number of optical cycles for such a short pulse is thus $O(\eps/\beta)$;
\begin{figure}
	\includegraphics[width=6cm,angle=270]{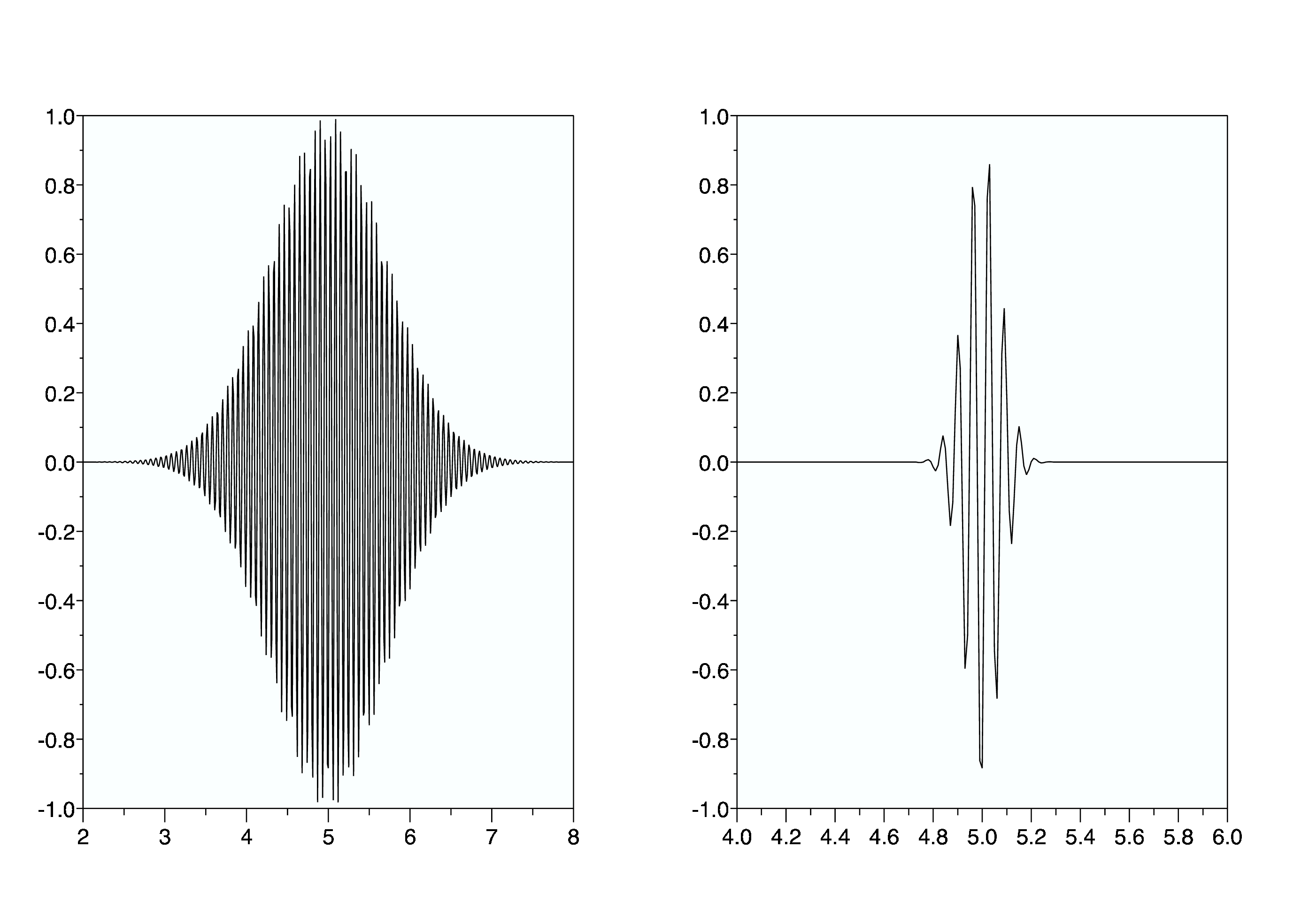}
	\caption{Initial condition 
	$U^0(x)e^{i\frac{{\bf k}\cdot x}{\eps}}+\cc$		
	with $U^0$ as in (\ref{IC}) and $f=e^{-x^2}$, $x_0=5$, $\eps=0.01$,
	and
	for $\beta=1$ and $\beta=0.1$}\label{figpulse}
\end{figure}
\item Chirped pulses: the initial profile $U^0(x)$ in (\ref{eqd}) is taken
of the form
\begin{equation}\label{chirped}
	U^0(x)=f(x-x_0)\cos(\frac{1}{\beta}\cos(\frac{x-x_0}{\beta})),
\end{equation}
with $0< \beta\leq 1$ and $f$ a smooth function; the case
$\beta=1$ corresponds to classical laser pulses, and chirped pulses
to $\beta\ll 1$ (see Fig. \ref{figchirped}).
\begin{figure}
	\includegraphics[width=6cm,angle=270]{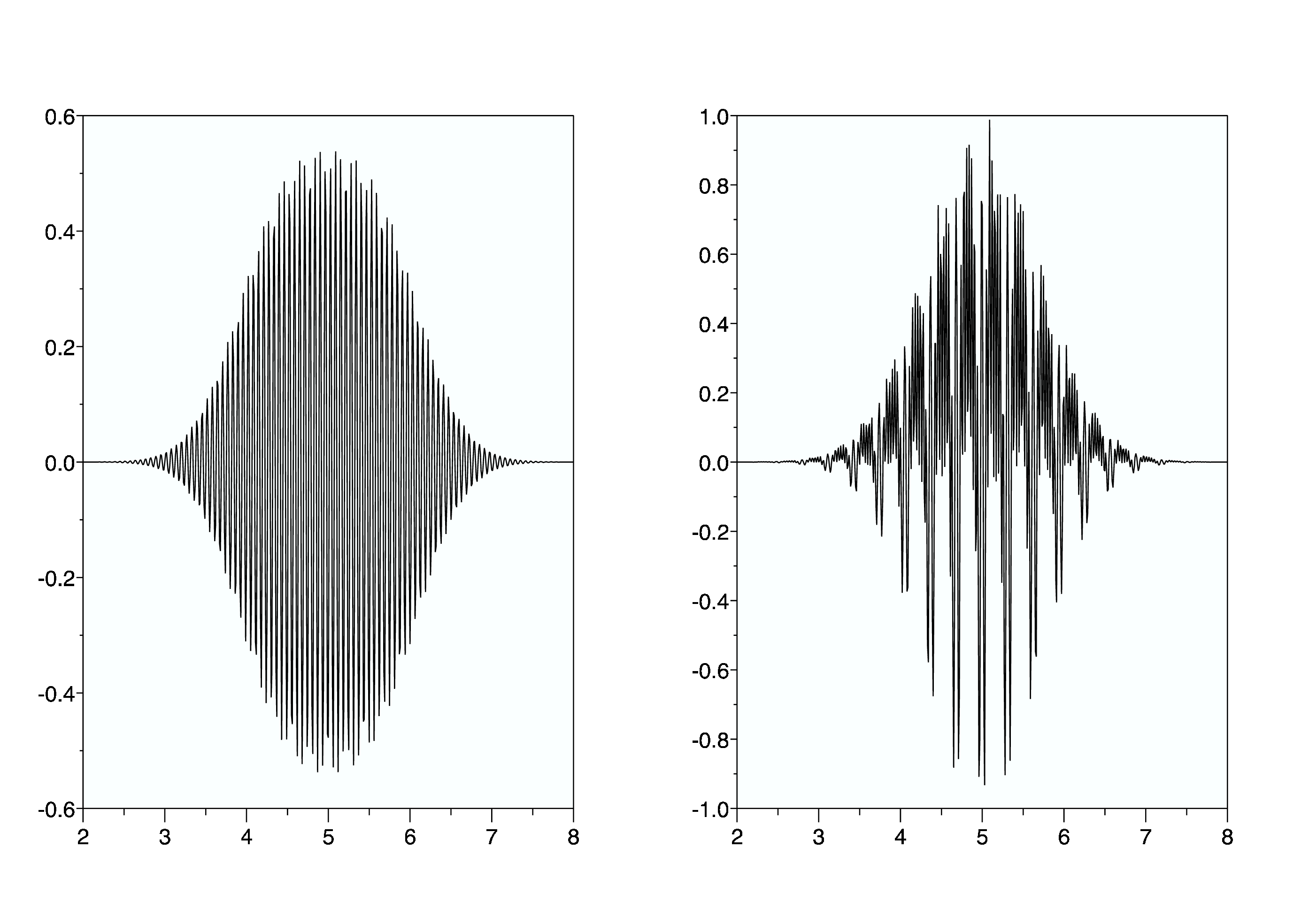}
	\caption{Initial condition 	
	$U^0(x)e^{i\frac{{\bf k}\cdot x}{\eps}}+\cc$
	with $U^0$ as in (\ref{chirped}) and $f=e^{-x^2}$, 
	$x_0=5$, $\eps=0.01$, and
	for $\beta=1$ and $\beta=0.1$}\label{figchirped}
\end{figure}
\end{itemize}

More precisely, we propose here to:
\begin{enumerate}
	\item Provide a framework simpler than \cite{BL} but
	general enough to handle any kind of large spectrum pulses
	(such as ultrashort and chirped pulses). The idea here is
	to keep the SVEA and to work with
	envelopes which are in a Wiener algebra. The reason of this
	choice is that when $\beta\to 0$, the Wiener norm (which controls
	the $L^\infty$-norm)
	of the initial envelopes (\ref{IC})
	remain bounded while 
	any Sobolev norm controlling the $L^\infty$-norm grows to infinity;
	\item Rigorously prove the ``practical rule'' (\ref{thumb}). We
	show that the SVEA (\ref{appr}) 
	makes sense if $\vert \nabla U^0\vert_W\ll \frac{1}{\eps}$,
	where $\vert\cdot\vert_W$ is the Wiener norm (see (\ref{defW}) below); 
	in the case of short-pulses (\ref{IC}), this condition
	is equivalent to  $\eps\ll \beta$;
	\item Establish precise error estimates for the full dispersion and
	Schr\"odinger models. This allows us to give precise estimates on the
	range of validity of these models;
	\item Derive and rigorously justify
	a new family of Schr\"odinger equations with 
	improved frequency dispersion and whose formulation is 
	purely differential (without nonlocal operator). The idea
	is to approximate the nonlocal operator of the full dispersion
	model by a suitable rational function, following an idea
	which proved very useful in water-waves theory
	(derivation of the BBM equation from the KdV equation \cite{BBM}, 
	derivation
	of Boussinesq models with improved frequency dispersion 
	\cite{BCS,BCL}).
	 These equations read
	\begin{equation}\label{eqnew}
	\begin{array}{l}
	\dsp \big(1-i\eps{\bf b}\cdot\nabla-\eps^2\nabla\cdot B\nabla\big)
	\dt U
	+({\bf c_g}\cdot\nabla)U\\
	\indent \dsp -\eps\frac{i}{2}{\mathcal R}(\partial,\partial)U
	+i\eps \nabla\cdot (\nabla\omega_1(\uk){\bf b}^T)\nabla U
	+\eps^2 {\bf C}(\nabla)U
	=\widetilde{F}(U),
	\end{array}
	\end{equation}
	where ${\bf b}\in \C^d$, $B\in {\mathcal M}_{d\times d}(\R)$ and 
	${\bf C}:\C^d\times\C^d\times\C^d\to \C$
	is a trilinear mapping (taking ${\bf b}=0$, $B=0$  and ${\bf C}=0$
	gives therefore (\ref{appr1}));
	\item Present some numerical comparisons between the different
	asymptotic models, for ultrashort and chirped pulses.
\end{enumerate}

\subsection{Organization of the paper}

Section \ref{sectAs} is devoted to the derivation and justification 
of the different asymptotic models. Basic assumptions, definitions, and tools
are first given in \S \ref{sectAs1} and we then proceed to
prove the envelope approximation (\ref{appr}) in \S \ref{sectAs2}; 
in particular, precise error estimates are established which allow
one to give a rigorous proof of the practical rule (\ref{thumb}).
In \S \ref{sectAs3}, we then derive and give precise error estimates 
in a Wiener algebra
setting of the full dispersion model (\S \ref{sectAs31}) and
the Schr\"odinger model (\S \ref{sectAs32}). Finally the new Schr\"odinger
equations (\ref{eqnew}) are derived in \S \ref{sectAs33}.

In Section \ref{sectNum}, we present numerical computations in order
to make a comparison of the different asymptotic models.

\subsection{Notations}

\noindent
- We denote generically by $\cst$ a constant whose value may change
from one line to another;\\
- We use the abbreviation $\cc$ for ``complex conjugate'', so that
for all $a\in\C$, $a+\cc=a+\overline{a}$;\\
- If ${\bf a}, {\bf b}\in \C^d$ then ${\bf a}\cdot {\bf b}$ denotes
the scalar (not hermitian) product: ${\bf a}\cdot {\bf b}=\sum_{j=1}
^d{\bf a}_j{\bf b}_j$.\\
- For all $x\in \R^d$, we write $x=(x_1,\dots,x_d)$, and $\partial_j$
stands for $\partial_j=\partial_{x_j}$; we also write $\nabla=(\partial_1,\dots,\partial_d)^T$ and, for all multi-index $\alpha\in\N^d$, $\partial^\alpha=\partial_1^{\alpha_1}\dots\partial_d^{\alpha_d}$;\\
- For all $j=1,\dots,d$, we write $D_j=\frac{1}{i}\partial_j$ and
$D=(D_1,\dots,D_d)=\frac{1}{i}\nabla^T$;\\
- The Fourier transform of a distribution $u\in {\mathfrak S}(\R^d)$
is denoted either by ${\mathcal F}u$ or $\widehat{u}$;\\
- We use the classical notation for Fourier multipliers: 
$$
	f(D)u={\mathcal F}^{-1} \big(\xi\mapsto f(\xi)\widehat{u}(\xi)\big);
$$
- If $f\in L^\infty(\R^d)$, we simply write $\vert f\vert_\infty=\vert f\vert_{L^\infty}$.

\section{Asymptotic results}\label{sectAs}

This section is devoted to the study of the asymptotic behavior when 
$\eps$ goes to zero of the
solution  to the initial value problem (ivp)
\begin{equation}
	\label{eq1}
	\left\lbrace
	\begin{array}{l}
	\dsp \dt u+A(\partial)u+\frac{E}{\eps}u=\eps T(u,\overline{u},u),
	\vspace{1mm}\\
	\dsp u\init=U^0(x)e^{i\frac{\uk\cdot x}{\eps}}+\cc,
	\end{array}\right.
\end{equation}
where $A(\partial)$, $E$ and  $T(\cdot,\cdot,\cdot)$ are defined in the next 
section.

\subsection{Basic assumptions and tools}\label{sectAs1}

We make the following assumption on the operators $A(\partial)$, $E$
and $T(\cdot,\cdot,\cdot)$ which appear in (\ref{eq1}):
\begin{assu}
	\label{as1}
	{\bf i.} The system (\ref{eq1}) is symmetric hyperbolic in the
	sense that for some $n\geq 1$:
	\begin{itemize}
	\item One has $A(\partial)=\sum_{j=1}^d A_j\partial_j$, and the $A_j$
	are $n\times n$ symmetric, real-valued, matrices;
	\item The $n\times n$ matrix $E$ is real and skew-symmetric;
	\end{itemize}
	{\bf ii.} The mapping
	$$
	T=
	\begin{array}{ccc}
	\C^{3n} &\to & \C^n\\
	(u^1,u^2,u^3)&\mapsto& T(u^1,u^2,u^3),
	\end{array}
	$$
	is linear with respect to $u^1$, $u^2$ and $u^3$.
\end{assu}
\begin{exam}
A standard model for the propagation of a beam in a Kerr medium is the 
Maxwell-Lorentz system which can be written in 
dimensionless form as 
\begin{equation}\label{MX}
	\left\lbrace
	\begin{array}{l}
	\dt B+\curl E=0,\vspace{0.5mm}\\
	\dt E -\curl B+\frac{1}{\eps}Q=0,\vspace{0.5mm}\\
	\dt Q-\frac{1}{\eps}(E-P)=\vert P\vert^2P,\vspace{0.5mm}\\
	\dt P-\frac{1}{\eps}Q=0,
	\end{array}\right.
\end{equation}
where $(E,B)$ is the electromagnetic field, $P$ the polarization and
$Q=\eps\dt P$. This system is of the form (\ref{eq1}) and satisfies
Assumption \ref{as1} with $n=4d$
and
$$
	A(\partial)=
	\left(\begin{array}{cccc}
	0& \nabla\times & 0 & 0\\
	-\nabla\times & 0 &0 &0\\
	0 & 0 & 0 & 0\\
	0 & 0 & 0 & 0
	      \end{array}\right)
	\quad\mbox{ and }\quad
	E=\left(\begin{array}{cccc}	
	0 & 0 & 0 & 0\\
	0 & 0 & I_{d\times d} & 0\\
	0 & -I_{d\times d}& 0 & I_{d\times d}\\
	0 & 0 & -I_{d\times d}& 0
		\end{array}\right)
$$
(the entries in the above matrices are $d\times d$ matrices); denoting
$u^j=(B^j,E^j, Q^j,P^j)$ ($j=1,2,3$), the nonlinearity
is given by
$$
	T(u^1,u^2,u^3)=\left(
	\begin{array}{c}
	0\\ 0\\ (P^1\cdot P^2)P^3\\0
	\end{array}\right).
$$
\end{exam}
\begin{exam}
A simple toy model is the following Klein-Gordon system
\begin{equation}\label{KG}
	\dt u
	+\left(
	\begin{array}{cc}
	0 & \nabla^T\\
	\nabla & 0
	\end{array}\right)u
	+\frac{1}{\eps}
	\left(
	\begin{array}{cc}
	0 & -{\bf v}\\
	{\bf v}^T & 0
	\end{array}\right)u=
	\vert u\vert^2
		\left(
	\begin{array}{cc}
	0 & -{\bf v}\\
	{\bf v}^T & 0
	\end{array}\right)u,
\end{equation}
with $u: \R^+_t\times\R^d_x\mapsto \C^{1+d}$
and ${\bf v}\in \R^d\backslash\{0\}$.\\
Quite obviously, (\ref{KG}) is of the form (\ref{eq1}) and 
satisfies Assumption \ref{as1} with $n=1+d$,
$$
	A(\partial)=
	\left(
	\begin{array}{cc}
	0 & \nabla^T\\ \nabla & 0
	\end{array}
	\right),
	\qquad
	E=	\left(
	\begin{array}{cc}
	0 & -{\bf v}^T\\
	{\bf v} & 0
	\end{array}\right)
$$
and $
	T(u^1,u^2,u^3)=(u^1\cdot u^2)	\left(
	\begin{array}{cc}
	0 & -{\bf v}^T\\
	{\bf v} & 0
	\end{array}\right)u^3$.
\end{exam}

Under Assumption \ref{as1}, the matrix 
$A(\bk)+\frac{E}{i}=\sum_{j=1}^dA_j \bk_j+\frac{E}{i}$ 
is hermitian for all $\bk\in \R^d$, and thus diagonalizable,
with real eigenvalues. We can therefore define the
\emph{characteristic variety} as
\begin{equation}\label{db}
	{\mathcal C}:=\{ (\omega,\bk)\in \R\times\R^d, \omega
	\mbox{ is an eigenvalue of } A(\bk)+\frac{E}{i}\};
\end{equation}
introducing
\begin{equation}
	\label{eq3}
	\cL(\omega,\uk):=-\omega I+A(\bk)+\frac{E}{i},
\end{equation}
one can equivalently define $\cC$ as the set of all 
$(\omega,\bk)\in \R\times\R^d$ such that $\cL(\omega,\bk)$ is \emph{not} invertible.

It is classical and not restrictive for our present concern to make
the following assumption on $\cC$:
\begin{assu}
	\label{as2}
	There exists $m\in\N$ and $m$ different smooth functions 
	$\omega_j\in C^\infty(\R^d\backslash\{0\})$ ($j=1,\dots,m$), 
	such that for all $\bk\in \R^d\backslash\{0\}$, the
	eigenvalues of $A(\bk)+\frac{E}{i}$ are exactly
	$\omega_j(\bk)$ ($j=1,\dots,m$).
\end{assu}

Under this Assumption \ref{as2}, one can write,
for all  $\bk\in \R^d\backslash\{0\}$,
\begin{equation}	
	\label{eq2}
	A(\bk)+\frac{E}{i}=\sum_{j=1}^m\omega_j(\bk)\pi_j(\bk),
\end{equation}
where $\pi_j(\bk)$ denote the eigenprojector associated to $\omega_j(\bk)$
(in particular, $\pi_j\in C^\infty(\R^d\backslash\{0\};{\mathcal M}_n(\C)$).

\bigbreak

We finally need a last assumption on the wave number $\uk$ of the initial
data of the ivp (\ref{eq1}) in order to justify the asymptotic equations
derived in this article.
\begin{assu}
	\label{as3}
	One has $\uk\neq 0$ and, with $\uom=\omega_1(\uk)$:
	\begin{itemize}
	\item One has $(3\uom,3\uk)\notin\cC$;
	\item With the notations of Assumption \ref{as2}, 
	$$
	\exists c_0>0,\quad \forall j=2,\dots,m, \qquad
	\inf_{\bk\in \R^d\backslash\{0\}}
	\big\vert \uom -\omega_j(\bk)\big\vert
	\geq c_0.
	$$
	\end{itemize}
\end{assu}
\begin{rema}
	The first part of the assumption excludes resonances with the
	third harmonic. The results presented here could easily be 
	extended to cover such a situation, but it is not 
	restrictive at all to make this assumption. 
\end{rema}
\begin{exam}
	For the Maxwell equations (\ref{MX}), one can check after some 
	computations that Assumption \ref{as2} is satisfied with $m=7$
	and
	\begin{eqnarray*}
	\omega_1({\bf k})&=&\frac{1}{2}\big(\sqrt{2(1+\vert{\bf k}\vert)+\vert{\bf k}\vert^2}+\sqrt{2(1-\vert{\bf k}\vert)+\vert{\bf k}\vert^2}\big),\\
	\omega_2({\bf k})&=&\sqrt{2},\\
	\omega_3({\bf k})&=&\frac{1}{2}\big(\sqrt{2(1+\vert{\bf k}\vert)+\vert{\bf k}\vert^2}-\sqrt{2(1-\vert{\bf k}\vert)+\vert{\bf k}\vert^2}\big),\\
	\omega_4({\bf k})&=&0,
	\end{eqnarray*}
and $\omega_5=-\omega_3$, $\omega_6=-\omega_2$, $\omega_7=-\omega_1$
	(so that $\omega_1>\omega_2>\dots>\omega_7$ on $\R^d\backslash\{0\}$);
	one can therefore take $c_0=\omega_1(\uk)-\sqrt{2}>0$.
\end{exam}
\begin{exam}
	For the Klein-Gordon system (\ref{KG}) one readily checks that
	Assumption \ref{as2} is satisfied with $m=2$,
	$\omega_1({\bf k})=\sqrt{\vert {\bf k}\vert^2+\vert {\bf v}\vert^2}$ 
	and $\omega_2=-\omega_1$. One can then remark that 
	Assumption \ref{as3} also holds for all $\uk\neq 0$
	and $c_0=\omega_1(\uk)+\vert {\bf v}\vert$. 
\end{exam}

\bigbreak

We finally end this section with some results on the Wiener algebras. First 
recall that $W^k(\R^d;\C^n)$ ($k,n\in\N$) and $W(\R^d_X\times\T_\theta;\C^n)$ (which will be denoted by 
$W(\R^d\times\T;\C^n)$ in the sequel)
are defined as
\begin{equation}\label{defW}
	W^k(\R^d;\C^n):=\{
	f\in {\mathcal S}'(\R^d)^n,\forall \alpha\in\N^d, 
	\vert\alpha\vert\leq k, \vert {\partial^\alpha f}\vert_W<\infty\},
\end{equation}
with $\vert f\vert_W:=\vert \widehat{f}\vert_{L^1(\R^d;\C^n)}$, 
and 
$$
	W(\R^d\times \T;\C^n):=\{
	f=\sum_{n\in\Z} f_n(X)e^{in\theta},
	\vert f\vert_{W(\R^d\times\T)}:=\sum_n\vert f_n\vert_W<\infty\}
$$
(when $k=0$, we write $W(\R^d;\C^n)$ instead of $W^0(\R^d;\C^n)$).\\
The classical properties of the Wiener algebras used in this article
are recalled in the following proposition.
\begin{prop}
	\label{pr1}
	{\bf i.} The space $W^k(\R^d;\C)$, $k\in\N$, 
	(resp. $W(\R^d\times\T;\C)$) 
	is an algebra
	in the sense that the mapping 
	$(f,g)\mapsto fg$ is continuous
	from $W^k(\R^d;\C)^2$ into $W^k(\R^d;\C)$ 
	(resp. $W(\R^d\times \T;\C)^2$ into $W(\R^d\times \T;\C)$).\\
	{\bf ii.} If $M$ is a skew-symmetric, real valued $n\times n$
	matrix, then $\exp(-iM)$ is unitary on
	$W^k(\R^d;\C^n)$ (resp. $W^k(\R^d\times \T;\C^n)$).\\
	{\bf iii.} If $f\in W(\R^d;\C)$ then for all $\beta>0$,
	one has $f(\frac{\cdot}{\beta})\in W(\R^d;\C)$ and
	$\vert f(\frac{\cdot}{\beta})\vert_W=\vert f\vert_W$.
\end{prop}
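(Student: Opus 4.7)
All three claims reduce to Fourier-side manipulations, using the defining identity $|f|_W = \|\widehat{f}\|_{L^1}$ on $\R^d$ and, on $\R^d\times\T$, the fact that $|f|_{W(\R^d\times\T)} = \sum_n |f_n|_W$ where the $f_n(X)$ are the Fourier coefficients in $\theta$. For (i), on $\R^d$ the algebra property in $W^0$ follows from $\widehat{fg}$ being (up to normalization) $\widehat{f}*\widehat{g}$ together with Young's inequality $\|\widehat{f}*\widehat{g}\|_{L^1} \leq \|\widehat{f}\|_{L^1}\|\widehat{g}\|_{L^1}$. For $k\geq 1$ I would apply the Leibniz rule $\partial^\alpha(fg) = \sum_{\beta\leq\alpha}\binom{\alpha}{\beta} \partial^\beta f\,\partial^{\alpha-\beta}g$ and bound each product by the $W^0$ estimate. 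For $W(\R^d\times\T)$, multiplication of two Fourier series in $\theta$ produces a discrete convolution of the coefficients, and combining the discrete Young inequality with the $W(\R^d)$ algebra estimate termwise yields the claim.

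For (ii), the key linear-algebra observation is that when $M$ is real and skew-symmetric, $-iM$ is anti-Hermitian (since $(-iM)^* = iM^T = -iM$), so $\exp(-iM)$ is a unitary $n\times n$ matrix for the Euclidean norm on $\C^n$. Being a constant matrix, it commutes both with $\partial^\alpha$ and with the Fourier transform, hence $\widehat{\exp(-iM)\partial^\alpha f}(\xi) = \exp(-iM)\widehat{\partial^\alpha f}(\xi)$, whose Euclidean norm equals $|\widehat{\partial^\alpha f}(\xi)|$ pointwise in $\xi$; integrating in $\xi$ (and, in the torus case, summing in the $\theta$-index) preserves $|\cdot|_{W^k}$ exactly. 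Invertibility with inverse $\exp(iM)$ completes the proof of unitarity.

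For (iii), the scaling identity $\widehat{f(\cdot/\beta)}(\xi) = \beta^d\widehat{f}(\beta\xi)$ combined with the change of variables $\eta = \beta\xi$ yields $|f(\cdot/\beta)|_W = \int \beta^d |\widehat{f}(\beta\xi)|\,d\xi = \int |\widehat{f}(\eta)|\,d\eta = |f|_W$. None of the three assertions presents any serious obstacle, and indeed the proposition is essentially just bookkeeping of convolutions and changes of variable; the one point to keep careful track of is that the vector-valued Wiener norm be built from the Euclidean norm on $\C^n$, which is precisely what lets unitarity in (ii) transfer pointwise in $\xi$.
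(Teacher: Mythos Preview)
The paper does not actually prove this proposition; it presents the three items as ``classical properties\dots recalled'' and moves on. So there is no paper-side argument to compare against. Your arguments for (i) and (iii) are correct and are the standard ones (convolution plus Young, Leibniz for higher $k$, and the Jacobian cancellation under dilation).

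Your argument for (ii), however, contains a genuine slip. You compute $(-iM)^* = iM^T = -iM$, which is correct arithmetic; but the identity $(-iM)^* = -iM$ is the definition of \emph{Hermitian}, not anti-Hermitian. And if $-iM$ is Hermitian then $\exp(-iM)$ is positive self-adjoint rather than unitary: for $M=\bigl(\begin{smallmatrix}0&1\\-1&0\end{smallmatrix}\bigr)$ the matrix $-iM$ has eigenvalues $\pm 1$, so $\exp(-iM)$ has eigenvalues $e^{\pm 1}$, which do not lie on the unit circle. Thus the statement as literally written is false, and your proof cannot be salvaged without changing the hypothesis.

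The defect in fact originates in a misprint in the paper's formulation of (ii). In every place the proposition is invoked, the operator being exponentiated is $-\frac{it}{\eps}\cL(\uom,\uk+\eps\xi)$, and $\cL(\uom,\uk+\eps\xi)=-\uom I+A(\uk+\eps\xi)+E/i$ is \emph{Hermitian} (a real symmetric part plus $-iE$, which is Hermitian since $E$ is real skew-symmetric), not ``real and skew-symmetric'' as the paper asserts. The intended hypothesis in (ii) is therefore ``$M$ Hermitian'' (equivalently, $-iM$ anti-Hermitian); with that correction your pointwise-in-$\xi$ argument goes through verbatim and yields the unitarity actually used in the paper.
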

\begin{rema}
As said in the introduction, the third point of Proposition \ref{pr1}
is the main motivation to work with Wiener algebra rather than
Sobolev spaces, because the $W(\R^d;\C^n)$-norm of initial conditions
of the form (\ref{IC}) remains bounded (constant) while its $H^s$-norm is
of size $O(\beta^{d/2-s})$ as $\beta\to 0$. This framework is somehow
a simplified version of the functional setting of \cite{BL} (where the
SVEA is not made) and has also proved useful in the study of wave-packets
\cite{BF2} or localized solutions \cite{PS}.
\end{rema}
\subsection{The envelope approximation}\label{sectAs2}

This section is devoted to the justification of the envelope approximation
which states that the exact solution to (\ref{eq1}) can be described at
leading order by $u_{app}^\eps$ defined as
\begin{equation}
	\label{uapp}
	u_{app}^\eps(t,x)= U(t,x)e^{i\frac{\uk\cdot x-\uom t}{\eps}}+\cc,
\end{equation}
where the \emph{envelope} $U$ solves the \emph{envelope equation}
\begin{equation}
	\label{eq4}
	\left\lbrace
	\begin{array}{l}
	\dt U+\frac{i}{\eps}\cL(\uom,\uk+\eps D)U
	=\eps \cT(U)\\
	U\init(x)=U^0(x),
	\end{array}\right.
\end{equation}
with $\cL(\cdot,\cdot)$ given by (\ref{eq3}) and $\cT$ is defined 
as
$$
	\cT(U)=T(\bU,U,U)+T(U,\bU,U)+T(U,U,\bU).
$$

The interest of the envelope equation (\ref{eq4}) with respect to the
original ivp (\ref{eq1}) is that the fast oscillating scale has been
removed from the initial data. The main result of this section is the
following theorem.
\begin{theo}\label{th1}
	Let Assumptions \ref{as1}, \ref{as2} and \ref{as3} be satisfied,
	and let $a,b \in W^1(\R^d;\C^n)$, and 
	$$
	U^0=\pi_1(\uk)a+\eps b
	\qquad \mbox{ (Polarization condition)}.
	$$
	{\bf i.} There exists a time $\tau_0>0$ such that for all $0<\eps<1$,
	there is a unique 
	solution $U\in C([0,\frac{\tau_0}{\eps}); W(\R^d;\C^n))$ to
	(\ref{eq4});\\
	{\bf ii.} For all $0<\tau<\tau_0$, there exists
	$\eps_0$ such that for all $0<\eps<\eps_0$, there is a unique
	solution $u_{ex}^\eps\in C([0,\tau/\eps]\times \R^d)^n$ to (\ref{eq1}),
	and one has   
	$$
	\vert u_{ex}^\eps -u^\eps_{app}
	\vert_{L^\infty([0,\tau/\eps]\times \R^d)^n}
	\leq \eps C(\tau,\vert U^0\vert_W)(1+\vert b\vert_W
	+\vert \nabla U^ 0\vert_W),
	$$
	where $u^\eps_{app}$ is as defined in (\ref{uapp}).
\end{theo}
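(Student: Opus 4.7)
\textbf{Plan for Theorem \ref{th1}.} For (i) I would cast (\ref{eq4}) in Duhamel form
$$U(t)=\mathcal{S}^\eps(t)U^0+\eps\int_0^t\mathcal{S}^\eps(t-s)\cT(U(s))\,ds,\qquad \mathcal{S}^\eps(t):=e^{-it\cL(\uom,\uk+\eps D)/\eps},$$
and solve by Picard iteration in $C([0,T];W(\R^d;\C^n))$. Because $\cL(\uom,\bk)$ is hermitian for every $\bk$ by Assumption \ref{as1}, the Fourier multiplier $\mathcal{S}^\eps(t)$ has pointwise unitary symbol and is therefore an isometry on $W$; since $W$ is a Banach algebra by Proposition \ref{pr1}.i, $\cT$ is locally Lipschitz cubic and the contraction closes on a time scale $\tau_0\sim|U^0|_W^{-2}$ independent of $\eps$. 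The same fixed point, run at the $W^1$-level (using that $\partial_j$ commutes with $\mathcal{S}^\eps$), gives $U\in C([0,\tau_0/\eps);W^1)$ with a bound in terms of $|U^0|_{W^1}$ that I will need in (ii).

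For (ii) the idea is to remove the $O(\eps)$ residual of $u_{app}^\eps$ via a non-resonant corrector, then close a stability estimate on $[0,\tau/\eps]$. Using $\cL(\uom,\uk+\eps D)=\cL(\uom,\uk)+\eps A(D)$ one verifies that, for any smooth $V$ and $n\in\Z$ (with $\phi:=\uk\cdot x-\uom t$),
$$(\dt+A(\partial)+\tfrac{E}{\eps})\bigl(V\,e^{in\phi/\eps}\bigr)=\bigl(\dt V+\tfrac{i}{\eps}\cL(n\uom,n\uk+\eps D)V\bigr)e^{in\phi/\eps}.$$
Applied with $n=1$ and $V=U$, (\ref{eq4}) cancels the resonant harmonic, and expanding $T(u_{app}^\eps,\overline{u_{app}^\eps},u_{app}^\eps)$ leaves the residual of $u_{app}^\eps$ in (\ref{eq1}) as exactly $-\eps\,T(U,U,U)e^{3i\phi/\eps}+\cc$. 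Since $(3\uom,3\uk)\notin\cC$ by Assumption \ref{as3}, and the other sheets stay $c_0$-away from $\uom$ (hence from $3\uom$) by the same assumption together with Assumption \ref{as2}, a perturbation argument yields uniform bounds for $\cL(3\uom,3\uk+\eps D)^{-1}$ on $W$ and $W^1$. I would then set
$$\tilde u^\eps:=u_{app}^\eps+\eps^2 U_3\,e^{3i\phi/\eps}+\cc,\qquad U_3:=-i\,\cL(3\uom,3\uk+\eps D)^{-1}T(U,U,U),$$
and reapply the identity above with $n=3$; after using (\ref{eq4}) to rewrite $\dt U$ in terms of $\nabla U$ and of the non-polarized part of $U$, the new residual $\tilde R^\eps$ satisfies $|\tilde R^\eps(t)|_W\leq C\eps^2(1+|\nabla U|_W+|b|_W)$.

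The error $w^\eps:=u_{ex}^\eps-\tilde u^\eps$ has $w^\eps(0)=-\eps^2 U_3(0)e^{3i\uk\cdot x/\eps}+\cc=O(\eps^2)$ in $W$ and solves
$$\dt w^\eps+A(\partial)w^\eps+\tfrac{E}{\eps}w^\eps=\eps\bigl[T(u_{ex}^\eps,\overline{u_{ex}^\eps},u_{ex}^\eps)-T(\tilde u^\eps,\overline{\tilde u^\eps},\tilde u^\eps)\bigr]-\tilde R^\eps.$$
The free propagator $e^{-t(A(\partial)+E/\eps)}$ has symbol $e^{-t(iA(\xi)+E/\eps)}$, whose exponent is skew-hermitian ($A(\xi)$ real symmetric, $E$ real skew-symmetric), so it is again an isometry on $W$. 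Duhamel plus Gronwall in the Banach algebra $W$ then give $|w^\eps(t)|_W\leq C(\tau)\eps(1+|b|_W+|\nabla U^0|_W)$ on $[0,\tau/\eps]$; the existence of $u_{ex}^\eps$ on this interval is obtained in parallel by a bootstrap from the short-time well-posedness of (\ref{eq1}) in $W$. Combining with the trivial bound $|\tilde u^\eps-u_{app}^\eps|_{L^\infty}=O(\eps^2)$ and the embedding $W\hookrightarrow L^\infty$ yields the claim.

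\textbf{Main obstacle.} The technical heart is the uniform-in-$\eps$ estimate of $\cL(3\uom,3\uk+\eps D)^{-1}$ on the Wiener algebra, since non-resonance is only known at the base frequency; the spectral separation provided by Assumption \ref{as3} together with Assumption \ref{as2} is what makes the perturbation work. The conceptual heart is why the polarization $U^0=\pi_1(\uk)a+\eps b$ enters the bound only linearly in $|b|_W$: without it, the components of $U$ on the sheets $j\geq 2$ would be $O(1)$ and force $\dt U=O(1/\eps)$ in $W$ through the $\frac{1}{\eps}\cL(\uom,\uk)\pi_j$ terms, destroying the $O(\eps^2)$ gain of the corrector; writing $U^0=\pi_1(\uk)a+\eps b$ confines these components to size $O(\eps|b|_W)$ and thus turns the loss into the factor $|b|_W$ appearing in the error.
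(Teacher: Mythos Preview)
Your Duhamel/Picard argument for (i) is correct and matches the paper's Lemma~\ref{lm1}. For (ii), however, the corrector you propose does not work: the operator $\cL(3\uom,3\uk+\eps D)^{-1}$ is \emph{not} uniformly bounded on $W$ under Assumptions~\ref{as1}--\ref{as3}. Writing $\eta=3\uk+\eps\xi$, one has $\|\cL(3\uom,\eta)^{-1}\|=\max_j|\omega_j(\eta)-3\uom|^{-1}$, and as $\xi$ ranges over $\R^d$ the point $\eta$ sweeps all of $\R^d$; nothing in the assumptions prevents $\omega_j(\eta)=3\uom$ for some $\eta\neq3\uk$. In the Klein--Gordon example (\ref{KG}), $\omega_1(\bk)=\sqrt{|\bk|^2+|{\bf v}|^2}$ certainly crosses $3\uom$. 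Your justification---that the sheets $j\geq2$ stay $c_0$-away from $\uom$ ``hence from $3\uom$''---is a non sequitur: Assumption~\ref{as3} measures distance to $\uom$, not $3\uom$, and says nothing about the sheet $j=1$ away from $\uk$. The paper sidesteps this entirely by taking the corrector at the \emph{fixed} frequency, $V_1=-i\cL(3\uom,3\uk)^{-1}T(U_1,U_1,U_1)$, which only requires the pointwise non-resonance $(3\uom,3\uk)\notin\cC$; the extra residual $\eps(\dt+A(\partial))V_1$ this leaves behind is already $O(\eps^2)$ and harmless.

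A second gap is the propagation of polarization. You write that the initial condition ``confines these components to size $O(\eps|b|_W)$'', but this is only the statement at $t=0$; for $t\in[0,\tau/\eps]$ the nonlinearity feeds the modes $U_j=\pi_j(\uk+\eps D)U$, $j\geq2$, at rate $O(\eps)$ over a time $O(1/\eps)$, and a naive Gronwall gives only $U_{II}=O(1)$. The paper's Lemma~\ref{lm3} recovers the $O(\eps)$ bound by integrating by parts in time against $e^{-i(t-t')(\omega_j(\uk+\eps D)-\uom)/\eps}$ and invoking the spectral gap $\inf_{j\geq2,\bk}|\uom-\omega_j(\bk)|\geq c_0$: this is precisely where the second half of Assumption~\ref{as3} is used, and it is the mechanism that keeps $\dt U$ (hence $\dt V_1$) bounded uniformly in $\eps$. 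Without this step your $O(\eps^2)$ estimate on $\tilde R^\eps$ cannot close, and the factor $|b|_W$ in the final bound has no origin.
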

\begin{rema}\label{rmth}
	Assuming that $\vert U^0\vert_W$ and $\vert b\vert_W$ are
	$O(1)$ quantities --which is of course the case for short pulses--,
	one deduces the following result
	$$
	\mbox{The SVEA is valid if } \vert\nabla U^0\vert_W\ll \frac{1}{\eps},
	$$
	and Theorem \ref{th1} thus provides a rigorous basis for
	the ``practical rule'' (\ref{thumb}). 
	When working with short pulses with initial condition
	(\ref{IC}), it is easy to check that this
	condition reads simply $\eps\ll \beta$. 
\end{rema}
\begin{rema}\label{rmth2}
	Working in the more classical framework of Sobolev spaces,
	one could establish an error estimate similar to the one
	given by the theorem, but with $H^s$-norms ($s>d/2$) instead
	of $W$-norms in the rhs of the estimates. For
	short pulses with initial data (\ref{IC}), the control would
	therefore be of the form 
	$\eps C(\tau,\frac{1}{\beta^{s-d/2}})\frac{1}{\beta^{s+(1-d)/2}}$,
	which is obviously useless when $\beta\to 0$.
\end{rema}
\begin{proof}
Let us prove the following lemma, which implies the first point of the
theorem.
\begin{lemm}
	\label{lm1}
	Let $U^0\in W(\R^d;\C^n)$, and assume 
	that Assumption \ref{as1} 
	is satisfied.\\
	There exists 
	$\tau_0>0$ such that for all $0<\eps<1$ one has a unique 
	solution $U\in C([0,\frac{\tau_0}{\eps}); W(\R^d;\C^n))$
	to (\ref{eq4}). For
	all $0<\tau <\tau_0$, one has
	$$
	(a)\qquad \sup_{0\leq t\leq \tau /\eps} \vert U(t)\vert_W
	\leq C(\tau ,\vert U^0\vert_W);
	$$
	if moreover $U^0\in W^1(\R^d;\C^n)$ then one also has
	$$
	(b) \qquad \sup_{0\leq t\leq \tau /\eps} \vert \nabla U(t)\vert_W
	\leq C(\tau ,\vert U^0\vert_W)\vert \nabla U^0\vert_W.
	$$
\end{lemm}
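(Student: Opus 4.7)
The plan is to solve (\ref{eq4}) by Duhamel and a contraction argument in $C([0,\tau/\eps];W(\R^d;\C^n))$, exploiting two structural facts: the linear propagator is an isometry on the Wiener algebra, and the nonlinearity carries an $\eps$-factor that buys a long time scale.

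The crucial observation is that the propagator $S(t):=\exp\bigl(-\tfrac{it}{\eps}\cL(\uom,\uk+\eps D)\bigr)$ is an isometry on $W(\R^d;\C^n)$. Under Assumption \ref{as1}, $A(\uk+\eps\xi)$ is real symmetric and $E/i$ is hermitian (since $E$ is real and skew-symmetric), so $\cL(\uom,\uk+\eps\xi)$ is hermitian for every $\xi$; its exponential is thus pointwise unitary, and the definition $\vert f\vert_W=\vert\widehat f\vert_{L^1}$ immediately gives $\vert S(t)f\vert_W\leq\vert f\vert_W$. Since $S(t)$ commutes with $\nabla$, the same bound holds for $\nabla S(t)f=S(t)\nabla f$.

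Next we rewrite (\ref{eq4}) in Duhamel form,
$$U(t)=S(t)U^0+\eps\int_0^t S(t-s)\cT(U(s))\,ds,$$
and use the algebra property of Proposition \ref{pr1}.i together with the trilinearity of $T$ and $\vert\bU\vert_W=\vert U\vert_W$ to obtain $\vert\cT(U)\vert_W\leq C\vert U\vert_W^3$. A standard Picard iteration then yields a unique local solution $U\in C([0,T_\eps];W(\R^d;\C^n))$, and any such solution satisfies $\vert U(t)\vert_W\leq\vert U^0\vert_W+C\eps\int_0^t\vert U(s)\vert_W^3\,ds$. Comparison with the ODE $y'=C\eps y^3$ shows that $\vert U(t)\vert_W$ remains bounded by, say, $2\vert U^0\vert_W$ as long as $\eps t\leq\tau_0$ for some $\tau_0=\tau_0(\vert U^0\vert_W)>0$ independent of $\eps$; a standard continuation argument then extends the solution to $[0,\tau_0/\eps)$ and yields estimate (a).

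For estimate (b), apply $\nabla$ to the Duhamel formula: since $\nabla$ commutes with $S(t)$ and $\nabla\cT(U)$ is trilinear with a single factor of $\nabla U$, the algebra property gives $\vert\nabla U(t)\vert_W\leq\vert\nabla U^0\vert_W+C\eps\int_0^t\vert U(s)\vert_W^2\vert\nabla U(s)\vert_W\,ds$; inserting (a) and applying Gronwall on $[0,\tau/\eps]$ produces (b). The only genuinely non-routine point in the whole scheme is the isometry property of $S(t)$ on the Wiener algebra, which relies directly on the symmetric-hyperbolic structure of the system; everything else is the standard semilinear scheme, made uniform in $\eps$ by the small nonlinear prefactor.
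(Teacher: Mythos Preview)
Your proof is correct and follows essentially the same route as the paper: Duhamel formulation, Picard iteration in the Wiener algebra using that $S(t)$ is unitary (because $\cL(\uom,\uk+\eps\xi)$ is hermitian) together with the trilinear estimate $\vert\cT(U)\vert_W\leq C\vert U\vert_W^3$, an ODE/Gronwall comparison to get the $\eps$-independent lifespan $\tau_0$ and estimate (a), and then differentiation of Duhamel plus Gronwall for (b). Your justification of the unitarity of $S(t)$ via hermiticity of $\cL$ is in fact slightly more accurate than the paper's wording.
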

\begin{proof}
Uniqueness is obvious, and to prove existence, we use a classical
iterative method: let $U_0=U^0$ and, for all $n\in \N$,
$$
	U_{n+1}(t)=S(t)U^0+\eps\int_0^t S(t-t')
	\cT(U_n) dt',
$$
with $S(t)=\exp(-\frac{it}{\eps}\cL(\uom,\uk+\eps D))$.\\
Since $\cL(\uom,\uk+\eps\xi)$ is real and skew-symmetric for 
all $\xi\in\R^d$, we can use Proposition \ref{pr1} 
and the trilinearity of $T$ to get
$$
	\sup_{[0,t]}\vert U_{n+1}\vert_W\leq \vert U^0\vert_W+
	\eps \cst t \big(\sup_{[0,t]}\vert U_n\vert_W\big)^3,
$$
and
$$
	\sup_{[0,t]}\vert U_{n+1}-U_n\vert_W\leq 
	\eps \cst t C\big(\sup_{[0,t]}\vert U_n\vert_W,
	\sup_{[0,t]}\vert U_{n-1}\vert_W\big)
	\sup_{[0,t]}\vert U_n-U_{n-1}\vert_W,
$$
and it follows easily that the sequence $(U_n)_n$ converges in
$C([0,\frac{\tau }{\eps}];W^0(\R^d;\C^n))$ (for some $\tau >0$)
 to a solution $U$ of
\begin{equation}
	\label{eq5}
	U(t)=S(t)U^0+\eps\int_0^t S(t-t')
	\cT(U) dt';
\end{equation}
the solution can then be extended to a maximal time interval 
$[0,\frac{\tau _{max}(\eps)}{\eps})$ and one can show with classical 
arguments that $\tau_0:=\inf_{0<\eps<1}\tau_{\max}(\eps)>0$. 
The estimate (a) 
of the lemma also 
follows easily from (\ref{eq5}) and a Gronwall-type lemma.\\
Differentiating (\ref{eq5}) with respect to $x_j$ ($j=1,\dots,d$) and
using Proposition \ref{pr1} to control the $W(\R^d;\C^n)$-norm, one gets
$$
	\vert \partial_j U(t)\vert_W\leq
	\vert \partial_j U^0\vert_W
	+\eps\cst 
	\int_0^t \vert U(t')\vert_W^2\vert \partial_j U(t')\vert_W dt',
$$
and the estimate (b) follows from Gronwall's lemma and the
estimate (a).
\end{proof}

Before going further in the proof of the theorem, let us introduce 
some notation. We decompose the solution $U$ of (\ref{eq4}) provided
by Lemma \ref{lm1} as
$$
	U=U_1+\dots+U_m,
	\quad\mbox{ with }\quad
	U_j=\pi_j(\uk+\eps D)U,
$$
and we also write $U_{II}=U_2+\dots+U_m$.\\
The first step of the proof of the second part of the theorem consists
in controlling $\dt U_1$ uniformly in $\eps$ --which is much better than
the $O(1/\eps)$ estimate on $\dt U$ one can deduce directly from the equation
(\ref{eq4}).
\begin{lemm}
	\label{lm2}
	If Assumptions \ref{as1} and \ref{as2} are satisfied, then
	for all $0<\tau <\tau_0$,
	$$
	\sup_{0\leq t\leq \tau /\eps}\vert \dt U_1\vert_W\leq 
	C(\tau ,\vert U^0\vert_W)\big(1+\vert\nabla U^0\vert_W\big).
	$$
\end{lemm}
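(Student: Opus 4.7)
The plan is to apply the spectral projector $\pi_1(\uk+\eps D)$ to the envelope equation (\ref{eq4}) in order to cancel the singular $1/\eps$ factor in the dispersive term. Since the $\pi_j$'s commute with $\cL(\uom,\uk+\eps D)$ as Fourier multipliers, the decomposition (\ref{eq2}) yields
\begin{equation*}
\pi_1(\uk+\eps D)\,\cL(\uom,\uk+\eps D)=\bigl(\omega_1(\uk+\eps D)-\uom\bigr)\pi_1(\uk+\eps D),
\end{equation*}
so that $U_1$ satisfies
\begin{equation*}
\dt U_1 + \frac{i}{\eps}\bigl(\omega_1(\uk+\eps D)-\uom\bigr)U_1 = \eps\,\pi_1(\uk+\eps D)\,\cT(U).
\end{equation*}

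Next, since $\omega_1(\uk)=\uom$ and $\omega_1$ is smooth near $\uk\neq 0$, Taylor's formula with integral remainder gives
\begin{equation*}
\frac{1}{\eps}\bigl(\omega_1(\uk+\eps\xi)-\uom\bigr)=\xi\cdot g_\eps(\xi),\qquad g_\eps(\xi):=\int_0^1\nabla\omega_1(\uk+s\eps\xi)\,ds,
\end{equation*}
so the equation for $U_1$ becomes
\begin{equation*}
\dt U_1 = -i\,g_\eps(D)\cdot DU_1+\eps\,\pi_1(\uk+\eps D)\,\cT(U).
\end{equation*}
The singular prefactor has thus been absorbed into a spatial derivative of $U_1$. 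Using the fact that on $W$ a Fourier multiplier is controlled by the $L^\infty$-norm of its symbol, together with Proposition \ref{pr1}(i) for the trilinear nonlinearity, one obtains
\begin{equation*}
\vert \dt U_1(t)\vert_W\leq \cst\,\vert \nabla U(t)\vert_W + \eps\,\cst\,\vert U(t)\vert_W^3.
\end{equation*}
Lemma \ref{lm1}(a) bounds the second term by $\eps\, C(\tau,\vert U^0\vert_W)$, and Lemma \ref{lm1}(b) bounds $\vert\nabla U(t)\vert_W$ by $C(\tau,\vert U^0\vert_W)\vert\nabla U^0\vert_W$; summing gives the announced estimate.

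The main technical point is the uniform-in-$\eps$ boundedness on $W$ of the Fourier multipliers $\pi_1(\uk+\eps D)$ and $g_\eps(D)$. Their symbols are only guaranteed to be smooth on $\R^d\setminus\{0\}$ by Assumption \ref{as2}, so one has to handle carefully the behaviour of $\pi_1$ and $\nabla\omega_1$ near the origin, which in the Fourier variable $\xi$ corresponds to the (high) frequency $\xi\approx -\uk/\eps$. I would localize this region away with a smooth cutoff $\chi(\bk)$ supported off $\bk=0$ and equal to $1$ near $\uk$: on the main piece $\chi(\uk+\eps D)\pi_1(\uk+\eps D)$ and $\chi(\uk+\eps D) g_\eps(D)$ are genuinely smooth, uniformly bounded symbols and hence well-behaved Wiener multipliers, while the complementary high-frequency piece only contributes through the $D U_1$ factor and is controlled by $\vert \nabla U\vert_W$.
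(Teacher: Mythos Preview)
Your argument follows the paper's proof almost exactly: project the envelope equation onto the first mode, absorb the $1/\eps$ factor via a first-order Taylor expansion of $\omega_1$ at $\uk$, and then invoke Lemma~\ref{lm1}.

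The only divergence is in your last paragraph, where you treat the global boundedness of the multipliers $\pi_1(\uk+\eps D)$ and $g_\eps(D)$ as a genuine difficulty requiring a frequency cutoff near $\bk=0$. The paper disposes of this in one line: differentiating the eigenvalue relation $(A(\bk)+E/i)\pi_1(\bk)=\omega_1(\bk)\pi_1(\bk)$ and projecting gives $\partial_j\omega_1(\bk)\pi_1(\bk)=\pi_1(\bk)A_j\pi_1(\bk)$, whence $\vert\partial_j\omega_1(\bk)\vert\leq\vert A_j\vert$ uniformly on $\R^d\setminus\{0\}$. Thus $\nabla\omega_1\in L^\infty(\R^d)$ globally, your integrand $\nabla\omega_1(\uk+s\eps\xi)$ is bounded independently of $\xi$, and $\vert g_\eps\vert_\infty\leq\vert\nabla\omega_1\vert_{L^\infty}$ with no cutoff needed. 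Likewise $\pi_1(\bk)$ is an orthogonal projector and hence has operator norm $1$ for every $\bk\neq 0$, so $\pi_1(\uk+\eps D)$ is a bounded Wiener multiplier automatically. Your proposed cutoff argument is therefore unnecessary (and, as sketched, would still need the $L^\infty$ bound on $\nabla\omega_1$ to close on the high-frequency piece anyway).
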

\begin{proof}
Multiplying the envelope equation (\ref{eq4}) by 
$\pi_1(\uk+\eps D)$, one gets
\begin{equation}
	\label{eq6}
	\dt U_1+\frac{i}{\eps}\big(\omega_1(\uk+\eps D)-\uom\big)U_1
	=\eps \pi_1(\uk+\eps D)\cT(U).
\end{equation}
Recalling that $\uom=\omega_1(\uk)$, a first order Taylor expansion
shows that
$$
	\big\vert \big(\omega_1(\uk+\eps D)-\uom\big)U_1\big\vert_W
	\leq \eps\vert \nabla  \omega_1\vert_{L^\infty}\vert \nabla U_1\vert_W
$$
(note that one infers $\vert \nabla\omega_1\vert_{L^\infty}<\infty$ from the
observation that for all ${\bf k}\neq 0$, 
$\partial_j\omega_1({\bf k})\pi_1({\bf k})=\pi_1({\bf k})A_j\pi_1({\bf k})$).
It follows therefore from (\ref{eq6}), the trilinearity of $T$, and
Proposition \ref{pr1} that
$$
	\vert \dt U_1\vert_W\leq \vert \nabla \omega_1\vert_{L^\infty}
	\vert \nabla U_1\vert_W +\cst \eps \vert U\vert_W^3,
$$
and the result follows from Lemma \ref{lm1}.
\end{proof}

We now prove that the components $U_j$ ($j\geq 2$) remain
of size $O(\eps)$ if this is initially the case.
\begin{lemm}\label{lm3}
	If Assumptions \ref{as1}, \ref{as2} and \ref{as3} are satisfied,
	and if $U^0=\pi_1(\uk)a+\eps b$, then one
	has, for all $0<\tau <\tau_0$,
	$$
	\sup_{t\in [0,\tau/\eps ]}\vert U_{II}(t)\vert_W
	\leq \eps C(\tau ,\vert U^0\vert_W)(1+\vert b\vert_W
	+\vert \nabla U^0\vert_W).
	$$
\end{lemm}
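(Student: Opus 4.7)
For $j\geq 2$, apply $\pi_j(\uk+\eps D)$ to the envelope equation (\ref{eq4}) to obtain
\begin{equation*}
    \dt U_j + \frac{i}{\eps}\bigl(\omega_j(\uk+\eps D) - \uom\bigr) U_j = \eps \pi_j(\uk+\eps D) \cT(U),
\end{equation*}
and denote by $S_j(t)=\exp(-\frac{it}{\eps}(\omega_j(\uk+\eps D)-\uom))$ the associated semigroup (unitary on $W$ by Proposition \ref{pr1}). Crucially, Assumption \ref{as3} guarantees that the Fourier multiplier $M_j(D):=(\omega_j(\uk+\eps D)-\uom)^{-1}$ is well-defined and bounded on $W$ with operator norm $\leq 1/c_0$. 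The whole proof consists in exploiting the fast oscillations of $S_j$ at scale $1/\eps$ (a consequence of non-resonance) to extract a factor of $\eps$.

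\textbf{Initial data.} The polarization condition together with the orthogonality $\pi_j(\uk)\pi_1(\uk)=0$ (distinct eigenvalues of the hermitian matrix $A(\uk)+E/i$) give
\begin{equation*}
    U_j(0) = \bigl[\pi_j(\uk+\eps D)-\pi_j(\uk)\bigr]\pi_1(\uk) a + \eps\, \pi_j(\uk+\eps D) b.
\end{equation*}
A first-order Taylor expansion of the smooth projector $\pi_j$ yields $|\pi_j(\uk+\eps\xi)-\pi_j(\uk)| \leq \cst\, \eps|\xi|$, which together with $\pi_1(\uk)\nabla a = \nabla U^0 - \eps\nabla b$ and Proposition \ref{pr1} delivers the $O(\eps)$ bound $|U_j(0)|_W \leq \eps C(|\nabla U^0|_W + |b|_W)$.

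\textbf{Integration by parts and Gronwall closure.} Apply Duhamel and integrate by parts in $s$ via the identity $S_j(t-s) = -i\eps M_j(D)\partial_s S_j(t-s)$:
\begin{equation*}
    U_j(t) = S_j(t)U_j(0) - i\eps^2 M_j(D)\pi_j\bigl[\cT(U(t))-S_j(t)\cT(U(0))\bigr] + i\eps^2 M_j(D)\!\int_0^t S_j(t-s)\pi_j\partial_s\cT(U(s))\,ds.
\end{equation*}
The boundary contributions are $O(\eps^2)$ thanks to Lemma \ref{lm1}. For the remaining integral, the trilinearity of $\cT$ and Proposition \ref{pr1} give $|\partial_s\cT(U)|_W \leq \cst\, |U|_W^2 |\dt U|_W$; splitting $\dt U = \dt U_1+\dt U_{II}$, the $\dt U_1$ part is controlled by Lemma \ref{lm2} and contributes $\eps^2\cdot(\tau/\eps)\cdot C(1+|\nabla U^0|_W) = O(\eps)$. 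For $\dt U_{II}$, the equation for each $U_j$ combined with the linear growth estimate $|\omega_j(\uk+\eps\xi)-\uom| \leq \cst(1+\eps|\xi|)$ (inherent to symmetric hyperbolic operators, since $A(\bk)$ is linear in $\bk$) produces
\begin{equation*}
    |\dt U_{II}|_W \leq \frac{\cst}{\eps}|U_{II}|_W + \cst\,(|\nabla U|_W + \eps|U|_W^3).
\end{equation*}
Multiplied by the $\eps^2$ from the IBP, the $1/\eps$ is traded for an $\eps$ factor in front of $\int_0^t|U_{II}(s)|_W\,ds$. Summing over $j\geq 2$ yields
\begin{equation*}
    |U_{II}(t)|_W \leq \eps C_0 + \eps\,\cst\int_0^t |U_{II}(s)|_W\,ds, \qquad t\in[0,\tau/\eps],
\end{equation*}
with $C_0=C(\tau,|U^0|_W)(1+|b|_W+|\nabla U^0|_W)$, and Gronwall's lemma concludes. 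The main obstacle is precisely this apparent circularity in bounding $\dt U_{II}$: the naive estimate $|\dt U_{II}|_W = O(1/\eps)$ alone would annihilate the gain, and the argument is tight because the $\eps^2$ gained from one integration by parts exactly compensates the unavoidable $1/\eps$ carried by the fast modes, leaving just enough room for a Gronwall closure over the long time interval $[0,\tau/\eps]$.
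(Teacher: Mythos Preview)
Your argument is correct and shares the paper's broad strategy (Duhamel, nonresonant integration by parts, Gronwall), but organizes the decomposition differently. The paper splits the source \emph{before} the integration by parts, writing $\cT(U)=\cT(U_1)+(\cT(U)-\cT(U_1))$: only the first piece is integrated by parts (its time derivative being controlled directly by Lemma~\ref{lm2}), while the second is bounded pointwise by $C|U|_W^2|U_{II}|_W$ and sent straight into the Gronwall loop. You instead integrate by parts the full $\cT(U)$ and afterwards split $\dt U=\dt U_1+\dt U_{II}$; this forces you to estimate $\dt U_{II}$ from its own equation via the linear growth $|\omega_j(\uk+\eps\xi)-\uom|\leq C(1+\eps|\xi|)$. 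Both routes land on the same Gronwall inequality; the paper's avoids the detour back through the $U_{II}$ equation, while yours is more mechanical and would adapt unchanged if $\cT$ were not so cleanly trilinear.

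One technical point you pass over too quickly: the projectors $\pi_j$ are smooth only on $\R^d\setminus\{0\}$ and their derivatives may blow up at the origin, so the global bound $|\pi_j(\uk+\eps\xi)-\pi_j(\uk)|\leq C\eps|\xi|$ does not follow from a Taylor expansion alone. The paper handles this by a low/high frequency split $\eps|\xi|\lessgtr|\uk|/2$: Taylor applies on the low part where $\uk+\eps\xi$ stays in a fixed compact set away from the origin, and on the high part the crude bound $|\pi_j(\uk+\eps\xi)-\pi_j(\uk)|\leq 2\leq \frac{4\eps}{|\uk|}|\xi|$ already suffices. Your conclusion $|U_j(0)|_W\leq \eps C(|\nabla U^0|_W+|b|_W)$ is correct, but this line of justification should be made explicit.
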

\begin{proof}
Multiplying (\ref{eq4}) by $\pi_j(\uk+\eps D)$ ($j\geq 2$) gives
\begin{eqnarray*}
	\lefteqn{\dt U_j+\frac{i}{\eps}\big(\omega_j(\uk+\eps D)-\uom\big)U_j
	=
	\eps \pi_j(\uk+\eps D)\cT(U)}\\
	&=& \eps \pi_j(\uk+\eps D)\cT(U_1)
	+\eps \pi_j(\uk+\eps D)\big(\cT(U)-\cT(U_1)\big).
\end{eqnarray*}
With $S_j(t)=\exp\big(-i\frac{t}{\eps}(\omega_j(\uk+\eps D)-\uom)\big)$,
one gets therefore
\begin{eqnarray}
	\nonumber
	U_j(t)&=& S_j(t)U_j^0+\eps\int_0^t
	S_j(t-t')\pi_j(\uk+\eps D)\cT(U_1)dt'\\
	\label{eq6b}
	& &+ \eps\int_0^t
	S_j(t-t')\pi_j(\uk+\eps D)\big(\cT(U)-\cT(U_1)\big)dt'.
\end{eqnarray}
We now  bound the 
$W$-norm of the three terms of the r.h.s. of
(\ref{eq6b}):
\begin{itemize}
\item Estimate of $S_j(t)U_j^0$. Since $S_j(t)$ is unitary on $W(\R^d;\C^n)$, 
one has
$$
	\vert S_j(t)U_j^0\vert_W=\vert U_j^0\vert_W
	=
	\vert \pi_j(\uk+\eps D)U^0\vert_W.
$$
Since moreover one can write
$$
	\pi_j(\uk+\eps D)U^0=(\pi_j(\uk+\eps D)-\pi_j(\uk))U^0+\pi_j(\uk)U^0,
$$
it follows from the
 orthogonality of the projectors $\pi_j$ ($j=1,\dots,m$) that
$$
	\pi_j(\uk+\eps D)U^0=(\pi_j(\uk+\eps D)-\pi_j(\uk))U^0+
	\eps\pi_j(\uk)b.
$$
Since the derivatives of $\pi_j(\cdot)$ are in general not bounded near the
origin, we cannot control the first term of the rhs by a Taylor expansion
and we thus write
\begin{eqnarray*}
	\pi_j(\uk+\eps D)U^0&=&(\pi_j(\uk+\eps D)-\pi_j(\uk))1_{\{\eps\vert D\vert\leq \vert\uk\vert/2\}}U^0\\
	&+&
	(\pi_j(\uk+\eps D)-\pi_j(\uk))1_{\{\eps\vert D\vert\geq \vert\uk\vert/2\}}U^0+
	\eps\pi_j(\uk)b,
\end{eqnarray*}
where $1_{\{\eps\vert\xi\vert\leq \vert\uk\vert/2\}}=1$ if $\eps\vert\xi\vert\leq \vert\uk\vert/2$ and $0$ otherwise.\\
Using the fact that $\pi_j(\cdot)$ is $C^\infty$ on the ball of center
$\uk$ and radius $\vert\uk\vert/2$, we can bound the first term
of the rhs in $W(\R^d,\C^n)$-norm by $\eps\cst \vert \nabla U^0\vert_W$; one
can also check that a similar estimate holds for the second term of the
lhs since one has $1\leq \frac{2\eps}{\vert\uk\vert}\vert \xi\vert$ for all
$\eps\vert\xi\vert\geq \vert\uk\vert/2$. We can thus conclude that
\begin{equation}
	\label{eq0est}
		\vert S_j(t)U_j^0\vert_W
	\leq \eps\cst (\vert b\vert_W+\vert \nabla U^0\vert_W).
\end{equation}
\item Estimate of 
$A:=\eps\int_0^tS_j(t-t')\pi_j(\uk+\eps D)\cT(U_1)dt'$.
Taking the Fourier transform of this term and integrating by parts
yields
\begin{eqnarray*}
	\eps\lefteqn{\int_0^t \exp\big(-i\frac{t-t'}{\eps}(\omega_j(\uk+\eps\xi)-\uom)\big)
	\pi_j(\uk+\eps\xi)\widehat{\cT(U_1)}dt'}\\
	&=&-i\eps\int_0^t \frac{\eps\exp\big(-i\frac{t-t'}{\eps}(\omega_j(\uk+\eps\xi)-\uom)\big)}{\omega_j(\uk+\eps\xi)-\uom}
	\pi_j(\uk+\eps\xi)\dt\widehat{\cT(U_1)}dt'\\
	& &+i\eps\Big[\frac{\eps\exp\big(-i\frac{t-t'}{\eps}(\omega_j(\uk+\eps\xi)-\uom)\big)}{\omega_j(\uk+\eps\xi)-\uom}
	\pi_j(\uk+\eps\xi)\widehat{\cT(U_1)}\Big]_0^t.
\end{eqnarray*}
One deduces therefore, using Assumption \ref{as3}, that
$$
	\sup_{t\in[0,\tau /\eps]}\vert A(t)\vert_W
	\leq\cst \frac{\eps \tau }{c_0}
	\sup_{[0,\tau /\eps]}\vert U_1\vert_W^2\sup_{[0,\tau /\eps]}
	\vert \dt U_1\vert_W
	+\cst \frac{\eps^2}{c_0}\sup_{[0,\tau /\eps]}\vert U_1\vert_W^3,
$$
so that, owing to Lemmas \ref{lm1} and \ref{lm2},
\begin{equation}
	\label{eq1est}
	\sup_{t\in[0,\tau /\eps]}\vert A(t)\vert_W
	\leq
	\eps C(\tau ,\vert U^0\vert_W)(1+\vert\nabla U^0\vert_W).
\end{equation}
\item Estimate of 
$B:=\eps\int_0^t
S_j(t-t')\pi_j(\uk+\eps D)\big(\cT(U)-\cT(U_1)\big)dt'$.
First remark that owing to the trilinearity of $T$, one has for all
$t\in [0,\tau /\eps]$,
$$
	\big\vert \cT(U)(t)-\cT(U_1)(t)\big\vert_W
	\leq
	\cst \sup_{[0,\tau /\eps]} \vert U\vert_W^2 
	\vert U_{II}(t)\vert_W;
$$
using Lemma \ref{lm1}, we obtain therefore 
\begin{equation}
	\label{eq2est}
	\sup_{t\in[0,\tau /\eps]}\vert B(t)\vert_W
	\leq \eps C(\tau , \vert U^0\vert_W)
	\int_0^t \vert U_{II}(t')\vert_Wdt'.
\end{equation}
\end{itemize}
It is now a direct consequence of (\ref{eq6b})$_j$ ($j=2,\dots m$) 
and (\ref{eq0est})-(\ref{eq2est})
that for all $t\in [0,\tau /\eps]$,
$$
	\vert U_{II}(t)\vert_W\leq 
	\eps (\vert b \vert_W+
	C(\tau ,\vert U^0\vert_W)(1+\vert\nabla U^0\vert_W)
	+\eps C(\tau , \vert U^0\vert_W)\int_0^t \vert U_{II}(t')\vert_Wdt',
$$
and the result follows therefore from Gronwall's lemma.
\end{proof}
We are now set to conclude the proof of the theorem. We look for 
an exact solution $u_{ex}^\eps$
to (\ref{eq1}) under the form
$$
	u_{ex}^\eps(t,x)=U_{ex}(t,x,\frac{\uk\cdot x-\uom t}{\eps}),
$$
with $U_{ex}\in W(\R^d\times\T;\C^n)$ itself of the form
$$
	U_{ex}(t,x,\theta)=U_{app}(t,x,\theta)+\eps V(t,x,\theta),
$$
with $U_{app}(t,x,\theta)=U(t,x)e^{i\theta}+\cc$ and $V$ bounded in 
$W(\R^d\times \T;\C^n)$. With $U$ as given by Lemma \ref{lm1}, the equation
that $V$ must solve is
\begin{eqnarray*}
	\lefteqn{\dt V+\frac{i}{\eps}\cL(\uom D_\theta,\uk D_\theta+\eps D)V
	=T(U,U,U)e^{i3\theta}+\cc}\\
	& &+
	\big(T(U_{app}+\eps V,\overline{U_{app}+\eps V},U_{app}+\eps V)
	-T(U_{app},\overline{U_{app}},U_{app})\big).
\end{eqnarray*}
Owing to the first part of 
Assumption \ref{as3}, we can look for $V$ under the form
$$
	V(t,x,\theta)=V_0(t,x,\theta)+\eps V_1(t,x)e^{i3\theta}+\cc,
$$
with $V_1=-i\cL(3\uom,3\uk)^{-1}T(U_1,U_1,U_1)$; the resulting
equation on $V_0$ is 
\begin{equation}\label{V0}
	\dt V_0
	+\frac{i}{\eps}\cL(\uom D_\theta,\uk D_\theta+\eps D)V_0
	=I_1+I_2+I_3,
\end{equation}
with 
\begin{eqnarray*}
	I_1&=&\big(T(U,U,U)-T(U_1,U_1,U_1)\big)e^{i3\theta}+\cc\\
	I_2&=&-\eps \big(\dt+A(\partial)\big)V_1 e^{3i\theta}+\cc\\
	I_3&=&\big(T(U_{app}+\eps V,\overline{U_{app}+\eps V},U_{app}+\eps V)
	-T(U_{app},\overline{U_{app}},U_{app})\big).
\end{eqnarray*}
Let us now bound $I_j$ ($j=1,2,3$) in $W(\R^d\times\T;\C^n)$ and
for all $t\in [0,\tau/\eps]$:
\begin{itemize}
\item From Lemmas \ref{lm1} and \ref{lm3}, one gets
\begin{equation}
	\label{eqsing1}
	\vert I_1(t)
	\vert_{W(\R^d\times \T)}
	\leq
	\eps C(\tau,\vert U^0\vert_W)
	(1+\vert b\vert_W+\vert\nabla U^0\vert_W).
\end{equation}
\item From the definition of $V_1$ and Lemmas \ref{lm1}-\ref{lm2}, 
one has directly
\begin{equation}
	\label{eqsing2}
	\vert I_2(t)\vert_{W(\R^d\times \T)}
	\leq 
	\eps C(\tau,\vert U^0\vert_W)(1+\vert\nabla U^0\vert_W).
\end{equation}
\item From the trilinearity of $T$ and Lemma \ref{lm1}, one gets
\begin{equation}
	\label{eqsing3}
	\vert I_3(t)\vert_{W(\R^d\times \T)}
	\leq 
	\eps C(\tau,\vert U^0\vert_W)(1+\vert V_0(t)\vert_W
	+\eps\vert V_0(t)\vert_W^2+\eps^2\vert V_0(t)\vert_W^3).
\end{equation}
\end{itemize}
By Proposition \ref{pr1}, the semigroup 
$S(t)=\exp(-i\frac{t}{\eps}\cL(\uom D_\theta,\uk D_\theta+\eps D) )$
is unitary on $W(\R^d\times \T)$, so that the estimates
(\ref{eqsing1})-(\ref{eqsing3}) allow one to conclude to the existence 
of a solution $V_0\in C([0,\tau/\eps];W(\R^d\times \T)^n)$ to (\ref{V0})
using a fixed point formulation similar to the one used in the proof
of Lemma \ref{lm1}. After a Gronwall argument, one also gets
\begin{equation}
	\label{eqsing4}
	\sup_{0\leq t\leq \tau/\eps}\vert V_0(t)\vert_W\leq
	C(\tau,\vert U^0\vert_W)(1+\vert b\vert_W+\vert\nabla U^0\vert_W).
\end{equation}
\begin{rema}
	In fact, one finds  that $V_0$ exists a priori on a time
	interval $[0,\tau'/\eps]$, with  $\tau'\leq \tau$.
	However, by a classical procedure of continuous induction
	(such as in the proof of Theorem 3 in \cite{LannesRauch} for instance),
	one can get $\tau'=\tau$, provided that $0<\eps<\eps_0$ with 
	$\eps_0$ small enough.
\end{rema}
It follows from the above that 
$$
	\sup_{t\in [0,\tau/\eps]}
	\vert U_{ex}(t)-U_{app}(t)\vert_{W(\R^d\times\T)}
	\leq
	\eps C(\tau,\vert U^0\vert_W)(1+\vert b\vert_W+\vert\nabla U^0\vert_W),
$$
and the theorem follows therefore from the observation that
$$
	\vert u^\eps_{ex}-u^\eps_{app}\vert_{L^\infty([0,\tau/\eps]\times\R^d)}
\leq \sup_{t\in [0,\tau/\eps]}
	\vert U_{ex}(t)-U_{app}(t)\vert_{W(\R^d\times\T)}.
$$
\end{proof}
\subsection{Approximations by scalar equations}\label{sectAs3}

\subsubsection{The full dispersion model}\label{sectAs31}

The full dispersion model consists in approximating the exact solution
to (\ref{eq1}) by $u^\eps_{app,1}$ defined as
\begin{equation}
	\label{uapp1}
	u_{app,1}^\eps(t,x)= 
	U_{(1)}(t,x)e^{i\frac{\uk\cdot x-\uom t}{\eps}}+\cc,
\end{equation}
where $U_{(1)}$ solves the \emph{full dispersion} scalar equation
\begin{equation}
	\label{eqfull}
	\left\lbrace
	\begin{array}{l}
	\dt U_{(1)}+\frac{i}{\eps}(\omega_1(\uk+\eps D)-\uom)U_{(1)}
	=\eps \pi_1(\uk)\cT(U_{(1)})\\
	U_{(1)}\,\init(x)=U^0(x)
	\end{array}\right.
\end{equation}
and with $\omega_1(\cdot)$ as in Assumption \ref{as1}.\\
The following corollary shows that the full dispersion scalar
equation yields an approximation of same precision than
the envelope equation for times $t\in [0,\tau/\eps]$.
\begin{coro}[Full dispersion model]\label{corofull}
	Under the assumptions of Theorem \ref{th1}, 
	and for all $0<\eps<\eps_0$ ($\eps_0>0$ small enough), there exists
	a unique 
	solution $U_{(1)}\in C([0,\tau_0/\eps);W(\R^d;\C^n))$
	to (\ref{eqfull}).\\
	 For all $0<\tau<\tau_{0}$, one also has
	$$
	\vert u_{ex}^\eps -u^\eps_{app,1}
	\vert_{L^\infty([0,\tau/\eps]\times \R^d)}
	\leq \eps C(\tau,\vert U^0\vert_W)(1+\vert b\vert_W
	+\vert \nabla U^0\vert_W),
	$$
	where $u^\eps_{app,1}$ is as defined in (\ref{uapp1}).
\end{coro}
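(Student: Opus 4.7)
The plan is to combine Theorem \ref{th1} with a comparison between the envelope $U$ solving (\ref{eq4}) and the scalar profile $U_{(1)}$ solving (\ref{eqfull}), using the elementary fact $\vert\cdot\vert_{L^\infty}\leq \vert\cdot\vert_W$ to transfer Wiener-norm control on $U-U_{(1)}$ into the desired $L^\infty$ bound on $u_{app}^\eps-u_{app,1}^\eps$. Since
$$
\vert u_{ex}^\eps-u_{app,1}^\eps\vert_{L^\infty}
\leq \vert u_{ex}^\eps-u_{app}^\eps\vert_{L^\infty}
+2\sup_{[0,\tau/\eps]}\vert U-U_{(1)}\vert_W,
$$
Theorem \ref{th1} disposes of the first summand, and the entire burden is to bound the second by $\eps C(\tau,\vert U^0\vert_W)(1+\vert b\vert_W+\vert\nabla U^0\vert_W)$.

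First I would settle the well-posedness of (\ref{eqfull}) on $[0,\tau_0/\eps)$ together with the analogues of estimates (a) and (b) of Lemma \ref{lm1}. Since $\frac{1}{\eps}(\omega_1(\uk+\eps D)-\uom)$ is a Fourier multiplier with real symbol, it generates a group of isometries on $W(\R^d;\C^n)$, and a Picard iteration identical to that of Lemma \ref{lm1} handles the trilinear nonlinearity; the same uniform lifespan $\tau_0$ can be used modulo shrinking $\eps$.

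To control $R:=U-U_{(1)}$, I would split $R=U_{II}+R_1$ with $R_1:=U_1-U_{(1)}$ and $U_1=\pi_1(\uk+\eps D)U$. Lemma \ref{lm3} directly furnishes the target bound on $U_{II}$. Projecting (\ref{eq4}) by $\pi_1(\uk+\eps D)$ as in (\ref{eq6}) and subtracting (\ref{eqfull}) yields
$$
\dt R_1+\frac{i}{\eps}(\omega_1(\uk+\eps D)-\uom)R_1
=\eps\bigl(\pi_1(\uk+\eps D)\cT(U)-\pi_1(\uk)\cT(U_{(1)})\bigr),
$$
whose right-hand side I would decompose into $\pi_1(\uk+\eps D)[\cT(U)-\cT(U_1)]$ (which is $O(\eps)$ in $W$-norm by trilinearity of $T$ and Lemma \ref{lm3}); $[\pi_1(\uk+\eps D)-\pi_1(\uk)]\cT(U_1)$; and $\pi_1(\uk)[\cT(U_1)-\cT(U_{(1)})]$, which is Lipschitz in $R_1$ with a constant depending only on $\vert U^0\vert_W$. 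Using the polarization $U^0=\pi_1(\uk)a+\eps b$ together with the orthogonality $\pi_j(\uk)\pi_1(\uk)=0$ for $j\geq 2$, the initial gap reads $R_1(0)=[\pi_1(\uk+\eps D)-\pi_1(\uk)]U^0-\eps\sum_{j\geq 2}\pi_j(\uk)b$. Unitarity of the semigroup on $W$, Duhamel, and Gronwall then close the loop on $[0,\tau/\eps]$.

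The main obstacle is the middle piece $[\pi_1(\uk+\eps D)-\pi_1(\uk)]\cT(U_1)$ (and the structurally identical term in $R_1(0)$): since the eigenprojector $\pi_1$ need not be smooth at the origin, a naive Taylor expansion is unavailable. I would reproduce verbatim the low/high-frequency splitting already used for $S_j(t)U_j^0$ in the proof of Lemma \ref{lm3}: on $\{\eps\vert\xi\vert\leq\vert\uk\vert/2\}$ one invokes the $C^\infty$ smoothness of $\pi_1$ on the ball of radius $\vert\uk\vert/2$ about $\uk$ to gain one power of $\eps$ by Taylor, while on the complementary set one absorbs $1\leq 2\eps\vert\xi\vert/\vert\uk\vert$. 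Both contributions are bounded by $\eps C\vert\nabla U_1\vert_W=\eps C\vert\nabla U^0\vert_W$ after Lemma \ref{lm1}, and Gronwall's lemma then delivers the claimed error estimate.
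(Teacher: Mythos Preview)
Your proof is correct and follows essentially the same route as the paper: reduce to Theorem \ref{th1}, split $U-U_{(1)}=U_{II}+(U_1-U_{(1)})$, invoke Lemma \ref{lm3} for $U_{II}$, and close by Duhamel/Gronwall on the difference equation for $U_1-U_{(1)}$, handling the projector discrepancy $\pi_1(\uk+\eps D)-\pi_1(\uk)$ via the low/high-frequency cut used in the proof of (\ref{eq0est}). The only cosmetic difference is that the paper decomposes the right-hand side as $(\pi_1(\uk+\eps D)-\pi_1(\uk))\cT(U)+\pi_1(\uk)\bigl(\cT(U)-\cT(U_1)\bigr)+\pi_1(\uk)\bigl(\cT(U_1)-\cT(U_{(1)})\bigr)$, applying the commutator to $\cT(U)$ rather than to $\cT(U_1)$; both variants are equivalent since Lemma \ref{lm1}(b) controls $\vert\nabla U\vert_W$ and hence $\vert\nabla U_1\vert_W$ as well.
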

\begin{rema}
	The quantity $U_{(1)}$ remains $\C^n$-valued, but we call
	(\ref{eqfull}) a \emph{scalar} approximation because the
	operator $\frac{i}{\eps}(\omega_1(\uk+\eps D)-\uom)$ is scalar,
	which is not the case of $\frac{i}{\eps}{\mathcal L}(\uom,\uk+\eps D)$
	in the envelope equation (\ref{eq4}). 	The interest of the 
	FD model is that 
	$\frac{i}{\eps}(\omega_1(\uk+\eps D)-\uom)u$ remains bounded for
	spectrally localized functions $u$, while  
	$\frac{i}{\eps}{\mathcal L}(\uom,\uk+\eps D)u$ is of order $O(1/\eps)$.
	The fast oscillations of the nonpolarized modes must
	therefore be taken into account with the 
	envelope approximation, and the discretization step must therefore
	be much smaller in numerical computations than for the FD model. 
\end{rema}
\begin{rema}
	{\bf i.} Performing the same analysis as in Remark \ref{rmth},
	one can check that the ``practical rule'' also applies for the
	FD model.\\
	{\bf ii.} The FD model has been derived and studied in \cite{CGL} (it is
	called ``intermediate model'' in that reference) for the study
	of chirped pulses with initial data (\ref{chirped}). However
	the error estimate provided in \cite{CGL} is of the 
	form $\eps C(\tau,\vert U^0\vert_{H^s})$, with $s$ large,
	which, as explained in Remark \ref{rmth2} is not small
	for short pulses (or more generally large spectrum 
	--including chirped-- pulses). Note that a variant of the FD
	model with the orthoganal projector $\pi_1(\uk)$ replaced
	by $\pi_1(\uk+\eps D)$ in front of the nonlinearity is also
	studied in \cite{CGL} and gives very good results.
\end{rema}
\begin{proof}
We omit the existence/uniqueness part of the corollary, since it is
obtained with the same tools as for Theorem \ref{th1} (in particular,
taking a smaller $\eps_0$ if necessary, the existence time of the
envelope equation is larger than the existence time for (\ref{eqfull})
and we can thus take the same $\tau_0$ as in Theorem \ref{th1}),
and we thus focus on the error estimate.\\
Denoting as in the proof of Theorem \ref{th1} $U_1=\pi_1(\uk+\eps D)U$,
where $U$ is the solution of the envelope equation, one gets 
from Lemma \ref{lm3} that 
$$
	\sup_{t\in [0,\tau/\eps]}\vert U(t)-U_1(t)\vert_W
	\leq \eps C(\tau,\vert U^0\vert_W)(1+\vert b\vert_W+	
	\vert \nabla U^0\vert_W),
$$
so that it suffices to control $\vert U_1(t)-U_{(1)}(t)\vert_W$ to
prove the corollary. Applying $\pi_1(\uk+\eps D)$ to (\ref{eq4}), one gets
$$
	\dt U_{1}+\frac{i}{\eps}(\omega_1(\uk+\eps D)-\uom)U_{1}
	=\eps \pi_1(\uk+\eps D)\cT(U),
$$
so that the difference $V=U_1-U_{(1)}$ solves
\begin{equation}
	\label{errfull}
	\left\lbrace
	\begin{array}{l}
	\dt V+\frac{i}{\eps}(\omega_1(\uk+\eps D)-\uom)V
	=\eps \pi_1(\uk+\eps D)\cT(U)-\eps\pi_1(\uk)\cT(U_{(1)})\\
	V\init(x)=\pi_1(\uk+\eps D)U^0-U^0.
	\end{array}\right.
\end{equation}
Remark now that
\begin{eqnarray}
	\nonumber
	\lefteqn{\pi_1(\uk+\eps D)\cT(U)-\pi_1(\uk)\cT(U_{(1)})
	=\big(\pi_1(\uk+\eps D)-\pi_1(\uk)\big)\cT(U)}\\
	\label{raj1}
	& &+\pi_1(\uk)	
	\big(\cT(U)-\cT(U_1)\big)
	+\pi_1(\uk)\big(\cT(U_1)-\cT(U_{(1)})\big).
\end{eqnarray}
Since $\vert (\pi_1(\uk+\eps D)-\pi_1(\uk))\cT(U)\vert_W
\leq \eps \cst \vert \nabla \cT(U)\vert_W$ (see the proof of
(\ref{eq0est})), one can use Lemma \ref{lm1}
to bound the first component of the rhs of (\ref{raj1}) from above by
$\eps C(\tau,\vert U^0\vert_W)\vert \nabla U^0\vert_W$. The second component 
of (\ref{raj1}) can be estimated exactly as the term
$I_1$ in (\ref{eqsing1}), while the last one is bounded from above
in $W(\R^d;\C^n)$ by $C(\vert U_1\vert_W,\vert U_{(1)}\vert_W)\vert V\vert_W$.
Since moreover $\vert U_1\vert_W$ is controlled by Lemma \ref{lm1} and
that a similar estimates also holds obviously for $\vert U_{(1)}\vert_W$,
one deduces that for all $0\leq t\leq\tau/\eps$,
$$
	\vert \cT(U(t))-\cT(U_{(1)}(t))\vert_W
	\leq
	\eps C(\tau,\vert U^0\vert_W)(1+\vert b\vert_W+\vert\nabla U^0\vert_W)
	+C(\tau,\vert U^0\vert_W)\vert V(t)\vert_W.
$$
This inequality, together with an energy estimate on (\ref{errfull}) and
a Gronwall argument shows that
$$
	\sup_{t\in [0,\tau/\eps]}\vert V(t)\vert_W
	\leq
 	\eps C(\tau,\vert U^0\vert_W)(1+\vert b\vert_W+\vert\nabla U^0\vert_W),
$$
where we also used the estimate
$
	\vert \pi_1(\uk+\eps D)U^0-U^0\vert_W\leq 
	\eps\cst(1+\vert b\vert_W+\vert\nabla U^0\vert_W)
$
(which is proved with the same arguments as (\ref{eq0est})). 
\end{proof}

\subsubsection{The nonlinear Schr\"odinger equation}\label{sectAs32}

In the Schr\"odinger approximation, the exact solution
to (\ref{eq1}) is approximated by $u^\eps_{app,2}$ defined as
\begin{equation}
	\label{uapp2}
	u_{app,2}^\eps(t,x)= 
	U_{(2)}(t,x)e^{i\frac{\uk\cdot x-\uom t}{\eps}}+\cc,
\end{equation}
where $U_{(2)}$ solves the \emph{nonlinear Schr\"odinger} equation
\begin{equation}
	\label{eqschrod}
	\left\lbrace
	\begin{array}{l}
	\dsp \dt U_{(2)}+\big(\nabla\omega_1(\uk)\cdot\nabla\big) U_{(2)}
	-\eps\frac{i}{2}\big(\nabla\cdot H_{\uk}(\omega_1)\nabla\big) U_{(2)}
	=\eps \pi_1(\uk)\cT(U_{(2)})\\
	U_{(2)}\,\init(x)=U^0(x),
	\end{array}\right.
\end{equation}
and where $H_{\uk}(\omega_1)$ stands for the Hessian of $\omega_1(\cdot)$ at $\uk$.
One then has
\begin{coro}[Schr\"odinger approximation]\label{coroschrod}
	Under the assumptions of Theorem \ref{th1}, 
	and for all $0<\eps<\eps_0$ ($\eps_0>0$ small enough), there exists
	a unique solution $U_{(2)}\in C([0,\tau_0/\eps);W(\R^d;\C^n))$
	to (\ref{eqschrod}).\\
	If moreover $U^0\in W^3(\R^d;\C^n)$ then 
	for all $0<\tau<\tau_0$, one also has
	$$
	\vert u_{ex}^\eps -u^\eps_{app,2}
	\vert_{L^\infty([0,\tau/\eps]\times \R^d)}
	\leq \eps C(\tau,\vert U^0\vert_W)(1+\vert \nabla U^0\vert_{W}+
	\vert b\vert_W
	+\vert {\mathfrak c}_{Schrod}\vert_\infty\vert U^0\vert_{W^3}),
	$$
	where $u^\eps_{app,2}$ is as defined in (\ref{uapp2}) 
	and 
	$$
	{\mathfrak c}_{Schrod}(\xi):=
	\frac{\big(\omega_1(\uk+\eps \xi)-\big(\uom
	+\eps\nabla\omega_1(\uk)\cdot\xi
	+\eps^2\frac{1}{2}\xi\cdot H_{\uk}(\omega_1)\xi\big)\big)}{\eps^3(1+\vert \xi\vert^3)}.		$$
\end{coro}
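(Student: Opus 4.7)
The plan is to bootstrap on Corollary \ref{corofull}: since $\vert u^\eps_{ex}-u^\eps_{app,2}\vert_\infty\leq \vert u^\eps_{ex}-u^\eps_{app,1}\vert_\infty +2\vert U_{(1)}-U_{(2)}\vert_W$, it suffices to control the Wiener norm of $V:=U_{(1)}-U_{(2)}$. The key algebraic observation is that the very definition of ${\mathfrak c}_{Schrod}$ gives the exact identity
$$
\frac{i}{\eps}\big(\omega_1(\uk+\eps D)-\uom\big)
= (\nabla\omega_1(\uk)\cdot\nabla)-\eps\frac{i}{2}\nabla\cdot H_{\uk}(\omega_1)\nabla
+i\eps^2\, {\mathfrak c}_{Schrod}(D)(1+\vert D\vert^3),
$$
so subtracting (\ref{eqschrod}) from (\ref{eqfull}) yields the linear equation
$$
\dt V+(\nabla\omega_1(\uk)\cdot\nabla)V
-\eps\frac{i}{2}\nabla\cdot H_{\uk}(\omega_1)\nabla V
= -i\eps^2\,{\mathfrak c}_{Schrod}(D)(1+\vert D\vert^3)U_{(1)}
+\eps\pi_1(\uk)\big(\cT(U_{(1)})-\cT(U_{(2)})\big),
$$
with $V\init=0$. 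The symbol of the linear part is purely imaginary, so the associated semigroup is unitary on $W(\R^d;\C^n)$, exactly as in the proof of Lemma \ref{lm1}.

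Existence and uniqueness of $U_{(2)}\in C([0,\tau_0/\eps);W(\R^d;\C^n))$ follows line by line from the iterative argument of Lemma \ref{lm1} (the first-order transport and second-order differential terms being trivially handled by Fourier multipliers with real symbols). Thus one can also extract, for $U_{(1)}$ solving the full dispersion equation, a $W^3$-regularity estimate: differentiating the Duhamel formula for (\ref{eqfull}) up to three times, using the algebra property of $W^k$ (Proposition \ref{pr1}.i), the trilinearity of $\cT$, and Gronwall, one obtains $\sup_{[0,\tau/\eps]}\vert U_{(1)}(t)\vert_{W^3}\leq C(\tau,\vert U^0\vert_W)\vert U^0\vert_{W^3}$. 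The crucial estimate on the source term then reads, via $\vert f(D)u\vert_W\leq \vert f\vert_\infty\vert u\vert_W$ and the elementary bound $\int (1+\vert\xi\vert^3)\vert\widehat{U_{(1)}}(\xi)\vert d\xi\leq \cst\,\vert U_{(1)}\vert_{W^3}$:
$$
\big\vert {\mathfrak c}_{Schrod}(D)(1+\vert D\vert^3)U_{(1)}(t)\big\vert_W
\leq \cst\,\vert{\mathfrak c}_{Schrod}\vert_\infty\,
C(\tau,\vert U^0\vert_W)\vert U^0\vert_{W^3}.
$$

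One now applies Duhamel to the equation for $V$ and takes the $W$-norm: the unitarity of the semigroup, the trilinearity of $\cT$, and Lemma \ref{lm1} applied to $U_{(1)}$ and $U_{(2)}$ give, for all $t\in[0,\tau/\eps]$,
$$
\vert V(t)\vert_W\leq
t\,\eps^2\,\cst\,\vert{\mathfrak c}_{Schrod}\vert_\infty C(\tau,\vert U^0\vert_W)\vert U^0\vert_{W^3}
+\eps\, C(\tau,\vert U^0\vert_W)\int_0^t\vert V(s)\vert_W\,ds.
$$
Since $t\leq \tau/\eps$, the source term is $O(\eps\,\vert{\mathfrak c}_{Schrod}\vert_\infty\vert U^0\vert_{W^3})$, and Gronwall's lemma yields $\sup_{[0,\tau/\eps]}\vert V\vert_W\leq \eps\, C(\tau,\vert U^0\vert_W)\vert{\mathfrak c}_{Schrod}\vert_\infty\vert U^0\vert_{W^3}$. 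Combining with Corollary \ref{corofull} gives the announced estimate. The only real difficulty is the combination of Taylor remainder and $W^3$ propagation; there is no genuinely new analytical ingredient beyond those already used in Lemma \ref{lm1} and Corollary \ref{corofull}.
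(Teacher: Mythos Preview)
Your proof is correct and follows essentially the same route as the paper's. The only cosmetic difference is that the paper keeps the full-dispersion operator $\frac{i}{\eps}(\omega_1(\uk+\eps D)-\uom)$ on the left of the equation for $V=U_{(1)}-U_{(2)}$, so that the Taylor remainder $\eps^2{\mathcal R}_2(D)$ lands on $U_{(2)}$ (and the $W^3$ propagation is carried out for the Schr\"odinger solution), whereas you keep the Schr\"odinger operator on the left and the remainder hits $U_{(1)}$; both variants close identically via unitarity and Gronwall.
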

\begin{rema}\label{remschrod}
{\bf i.} A third order Taylor expansion of $\omega_1(\uk+\eps\xi)$ at $\xi=0$
shows that $\vert{\mathfrak c}_{Schrod}\vert_\infty$ 
is finite and can be bounded from 
above independently from $\varepsilon$.\\
{\bf ii.} The component $\vert {\mathfrak c}_{Schrod}\vert_\infty\vert U^0\vert_{W^3}$
of the error estimate does not appear for the full dispersion model. It is
due to the approximation of the nonlocal operator
$\frac{i}{\eps}(\omega_1(\uk+\eps D)-\uom)$ (lhs of (\ref{eqfull}))
by the differential
operator $\nabla\omega_1(\uk)\cdot\nabla U_{(2)}
	-\eps\frac{i}{2}\nabla\cdot H_{\uk}\nabla U_{(2)}$
(lhs of (\ref{eqschrod})). This error term is thus a \emph{linear} effect.\\
{\bf iii.} This additional term is responsible for
the bad behavior of the Schr\"odinger equation to model short pulses
(and more generally large spectrum waves such as chirped pulses). For
instance, for initial data like (\ref{IC}), the precision of the
Schr\"odinger approximation is of order 
$O(\eps(\frac{1}{\beta}+\frac{\vert {\mathfrak c}_{Schrod}\vert_\infty}{\beta^3}))$ when $\beta\ll1$. In order for the Schr\"odinger approximation to 
keep the same order of precision as the full dispersion model, one needs
therefore to have $\frac{\vert {\mathfrak c}_{Schrod}\vert_\infty}{\beta^3}\
\lesssim \frac{1}{\beta}$, which requires that  
$\beta^2\geq \vert {\mathfrak c}_{Schrod}\vert_\infty$. 
This condition is far much restrictive
than the practical rule $\beta\gg \eps$.\\
{\bf iv.} The Schr\"odinger approximation has been rigorously justified
for systems of the form (\ref{eq1}) (and also for some quasilinear
generalizations) by Donnat, Joly, M\'etivier and Rauch \cite{DJMR,JMR}
and Lannes \cite{Lannes} but in the Sobolev framework which we saw is not
adapted for the study of large spectrum pulses.
\end{rema}
\begin{proof}
As in the proof of Corollary \ref{corofull}, we focus on the error estimate and
omit the existence/uniqueness part of the proof.\\
The difference $V=U_{(1)}-U_{(2)}$ of the solution of the full dispersion
and Schr\"odinger equations 
solves the initial value problem
\begin{equation}
	\label{errschrod}
	\left\lbrace
	\begin{array}{l}
	\dt V+\frac{i}{\eps}(\omega_1(\uk+\eps D)-\uom)V
	=\eps \pi_1(\uk)\big(\cT(U_{(1)})-\cT(U_{(2)})\big)
	-\eps^2{\mathcal R}_2(D)U_{(2)}\\
	V\init(x)=0.
	\end{array}\right.
\end{equation}
where for all $\xi\in \R^d$,
$$
	{\mathcal R}_2(\xi)
	=\frac{1}{\eps^3}\big(i\omega_1(\uk+\eps \xi)-i\uom
	-i\eps\nabla\omega_1(\uk)\cdot\xi
	-\eps^2\frac{i}{2}\xi\cdot H_{\uk}(\omega_1)\xi\big).
$$
Remark now that one has for all $0\leq t\leq \tau/\eps$,
$$
	\vert {\mathcal R}_2(D)U_{(2)}(t)\vert_W\leq \vert 
	{\mathfrak c}_{Schrod}\vert_\infty
	\vert U_{2}(t)\vert_{W^3},
$$
with ${\mathfrak c}_{Schrod}(\cdot)$ as in the statement of the corollary;
differentiating the Schr\"odinger equation
(\ref{eqschrod}) and estimating the $W$-norm of the solution,
one also gets easily 
$$
	\sup_{t\in [0,\tau/\eps]}\vert U_{(2)}(t)\vert_{W^3}\leq C(\tau,\vert U^0\vert_W)(1+\vert  U^0\vert_{W^3}).
$$
Since the first term of the r.h.s. of (\ref{errschrod}) can be bounded
as in (\ref{eqsing1}), one gets from Gronwall's lemma applied to
(\ref{errschrod}) that
$$
	\sup_{t\in [0,\tau/\eps]}\vert V(t)\vert_W\leq
	C(\tau,\vert U^0\vert_W)(1+\vert {\mathfrak c}_{Schrod}\vert_\infty
	\vert U^0\vert_{W^3})
$$
which, together with Corollary \ref{corofull}, yields the result.
\end{proof}

\subsubsection{The nonlinear Schr\"odinger equation with improved dispersion
relation}\label{sectAs33}

As said in the introduction, we propose in this paper new approximations 
based on a family
of modified Schr\"odinger equations, whose dispersive
properties are closer to the exact model. Such an
approximation $u^\eps_{app,3}$ is defined as
\begin{equation}
	\label{uapp3}
	u_{app,3}^\eps(t,x)= 
	U_{(3)}(t,x)e^{i\frac{\uk\cdot x-\uom t}{\eps}}+\cc,
\end{equation}
where $U_{(3)}$ solves the \emph{nonlinear Schr\"odinger equation
with improved dispersion relation}: 
\begin{equation}
	\label{eqpade}
	\left\lbrace
	\begin{array}{l}
	\dsp \big(1-i\eps{\bf b}\cdot\nabla-\eps^2\nabla\cdot B\nabla\big)
	\dt U_{(3)}\\
	\dsp\indent
	+\big(\nabla\omega_1(\uk)\cdot\nabla-i\eps \nabla\cdot (\frac{1}{2}H_{\uk}(\omega_1)+\nabla\omega_1(\uk){\bf b}^T)\nabla
	+\eps^2 {\bf C}(\nabla)\big)U_{(3)}\\
	\dsp \indent=\eps\pi_1(\uk)\cT(U_{(3)})\\
	U_{(3)}\,\init(x)=U^0(x),
	\end{array}\right.
\end{equation}
where ${\bf b}\in \C^d$, $B\in {\mathcal M}_{d\times d}(\R)$ and 
${\bf C}:\C^d\times\C^d\times\C^d\to \C$
is a trilinear mapping. We assume moreover that
\begin{equation}\label{hyp}
B\mbox{ is symmetric positive}, \quad {\bf b}\in Range(B),
	\quad\mbox{ and }\quad
	4-{\bf b}\cdot (B^{-1}{\bf b})>0
\end{equation}
(note that even though $B^{-1}{\bf b}$ is not unique when $B$ is not definite,
the scalar ${\bf b}\cdot (B^{-1}{\bf b})$ is uniquely defined).
One then has the following result:
\begin{coro}[Improved Schr\"odinger approximation]\label{coropade}
	Under the assumptions of Theorem \ref{th1}, 
	and for all $0<\eps<\eps_0$ ($\eps_0>0$ small enough), 
	there exists
	a unique solution $U_{(3)}\in C([0,\tau_0/\eps);W(\R^d;\C^n))$
	to (\ref{eqpade}).\\
	If moreover $U^0\in W^3(\R^d;\C^n)$ then 
	for all $0<\tau<\tau_0$, one also has
	$$
	\vert u_{ex}^\eps -u^\eps_{app,3}
	\vert_{L^\infty([0,\tau/\eps]\times \R^d)}
	\leq \eps C(\tau,\vert U^0\vert_W)(1+\vert \nabla U^0\vert_{W}+
	\vert b\vert_W
	+\vert {\mathfrak c}_{improved}\vert_\infty\vert U^0\vert_{W^3}),
	$$
	where $u^\eps_{app,3}$ is as defined in (\ref{uapp3}) 
	and 
	$$
	{\mathfrak c}_{improved}(\xi):=
	\frac{\omega_1(\uk+\eps \xi)-\big(\uom
	+\eps\frac{\nabla\omega_1(\uk)\cdot\xi+\eps\xi\cdot(\frac{1}{2}H_{\uk}(\omega_1)+\nabla\omega_1(\uk){\bf b}^T)\xi-\eps^2{\bf C}(\xi)}{1+\eps{\bf b}\cdot \xi+\eps^2\xi\cdot B\xi}\big)
}{\eps^3(1+\vert \xi\vert^3)}.		$$
\end{coro}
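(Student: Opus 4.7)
The plan is to mirror closely the proof of Corollary \ref{coroschrod}, with the rational symbol $N(\xi)/D_s(\eps\xi)$ replacing the polynomial $\nabla\omega_1(\uk)\cdot\xi+\frac{\eps}{2}\xi\cdot H_{\uk}(\omega_1)\xi$ there. My first step will be to show that $D_s(\eps D):=1-i\eps{\bf b}\cdot\nabla-\eps^2\nabla\cdot B\nabla$ is invertible on $W(\R^d;\C^n)$, uniformly in $\eps$. A short computation exploiting (\ref{hyp}) gives the lower bound $\vert 1+\eps{\bf b}\cdot\xi+\eps^2\xi\cdot B\xi\vert\geq 1-\frac{1}{4}{\bf b}\cdot(B^{-1}{\bf b})>0$ on $\R^d$, and the quadratic behavior at infinity then makes $1/D_s(\eps\xi)$ a bounded Wiener multiplier. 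Applying $D_s(\eps D)^{-1}$ to (\ref{eqpade}) recasts the equation as
\[\dt U_{(3)}+iM(\eps D)U_{(3)}=\eps D_s(\eps D)^{-1}\pi_1(\uk)\cT(U_{(3)}),\]
with $M(\eps\xi):=N(\xi)/D_s(\eps\xi)$ and $N(\xi):=\nabla\omega_1(\uk)\cdot\xi+\eps\xi\cdot(\frac{1}{2}H_{\uk}(\omega_1)+\nabla\omega_1(\uk){\bf b}^T)\xi-\eps^2{\bf C}(\xi)$. Existence, uniqueness, and the higher-regularity bound $\sup_{[0,\tau/\eps]}\vert U_{(3)}\vert_{W^3}\leq C(\tau,\vert U^0\vert_W)(1+\vert U^0\vert_{W^3})$ will then follow from a Picard iteration identical to that of Lemma \ref{lm1}, once one knows the semigroup $\exp(-itM(\eps D))$ is uniformly bounded on Wiener.

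For the error estimate, by Corollary \ref{corofull} and the triangle inequality it is enough to control $V:=U_{(1)}-U_{(3)}$ in $W$-norm on $[0,\tau/\eps]$. Subtracting the two equations yields, with $V\init=0$,
\[\dt V+\frac{i}{\eps}(\omega_1(\uk+\eps D)-\uom)V=\eps\pi_1(\uk)(\cT(U_{(1)})-\cT(U_{(3)}))+\eps(1-D_s(\eps D)^{-1})\pi_1(\uk)\cT(U_{(3)})-i\eps^2(1+\vert D\vert^3){\mathfrak c}_{improved}(D)U_{(3)},\]
where the last term encodes the key identity $M(\eps\xi)-\frac{1}{\eps}(\omega_1(\uk+\eps\xi)-\uom)=-\eps^2(1+\vert\xi\vert^3){\mathfrak c}_{improved}(\xi)$, which is nothing but the defining formula for ${\mathfrak c}_{improved}$ rearranged.

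Each source term is then bounded separately. The trilinearity of $T$ and Proposition \ref{pr1} make the nonlinear difference a $C(\tau,\vert U^0\vert_W)\vert V\vert_W$ term to be absorbed by Gronwall. The rational correction is rewritten using $1-D_s(\eps D)^{-1}=D_s(\eps D)^{-1}(-i\eps{\bf b}\cdot\nabla-\eps^2\nabla\cdot B\nabla)$, which gains one extra power of $\eps$ at the cost of one derivative on $\pi_1(\uk)\cT(U_{(3)})$, and thus contributes at most $\eps^2 C(\tau,\vert U^0\vert_W)(1+\vert\nabla U^0\vert_W)$. Finally, the purely linear remainder is bounded in $W$-norm by $\eps^2\vert{\mathfrak c}_{improved}\vert_\infty\vert U_{(3)}\vert_{W^3}$. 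Combining these via Gronwall's lemma (the free evolution $\exp(-i\frac{t}{\eps}(\omega_1(\uk+\eps D)-\uom))$ being unitary on $W(\R^d;\C^n)$ by Proposition \ref{pr1}(ii)) closes the estimate on $V$, and a further application of Corollary \ref{corofull} yields the announced bound.

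The main obstacle I anticipate is verifying the uniform (in $\eps$) Wiener-multiplier bounds for $D_s(\eps D)^{-1}$ and, more delicately, for the semigroup $\exp(-itM(\eps D))$: since ${\bf b}\in\C^d$ is allowed to be complex, the symbol $M$ need not be real-valued, the semigroup is not a priori unitary, and one must extract from (\ref{hyp}) a uniform upper bound on $\text{Im}\,M(\eps\xi)$ to run the Duhamel argument. Checking that the quadratic form associated with $D_s$ also controls the possibly-complex perturbation coming from ${\bf b}^T$ in $N$ is the genuinely nontrivial step.
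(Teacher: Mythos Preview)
Your approach is essentially identical to the paper's: complete the square via (\ref{hyp}) to invert $D_s(\eps D)$, recast (\ref{eqpade}) as an evolution equation, and then rerun the proof of Corollary~\ref{coroschrod}; you are in fact more careful than the paper in isolating the extra $(1-D_s(\eps D)^{-1})\pi_1(\uk)\cT(U_{(3)})$ source term, which the paper's terse ``follows exactly the same lines'' glosses over. As for your anticipated obstacle, it dissolves on closer reading of (\ref{hyp}): the strict inequality $4-{\bf b}\cdot(B^{-1}{\bf b})>0$ and the paper's own identity $1+{\bf b}\cdot\xi+\xi\cdot B\xi=1-\frac14{\bf b}\cdot(B^{-1}{\bf b})+(\xi-\xi_0)\cdot B(\xi-\xi_0)$ only make sense for real ${\bf b}$ (consistently with ${\bf b}\in\mbox{Range}(B)$, $B$ real, and with the concrete choice (\ref{coeffpade})), and under this implicit reality the symbol $M(\eps\xi)$ is real-valued, so $\exp(-itM(\eps D))$ is unitary on $W(\R^d;\C^n)$ and the Picard/Gronwall argument runs exactly as in Lemma~\ref{lm1}.
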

\begin{rema}
{\bf i.} As for the Schr\"odinger equation, one can check by a simple Taylor
expansion that $\vert {\mathfrak c}_{improved}\vert_\infty$ 
is finite and uniformly
bounded with respect to $\eps$.\\
{\bf ii.} Taking ${\bf b}=0$, $B=0$ and ${\bf C}=0$ (this choice
satisfies (\ref{hyp})), one recovers the usual Schr\"odinger equation
(\ref{eqschrod}).\\
{\bf iii.} The interest of (\ref{eqpade}) with respect to (\ref{eqschrod})
is that one can choose ${\bf b}$, $B$ and ${\bf C}$ such that
${\mathfrak c}_{improved}\ll {\mathfrak c}_{Schrod}$, thus improving
considerably the accuracy of the approximation. In the one dimensional
case $d=1$, it is possible to choose ${\bf b}$, ${\bf B}$ and ${\bf C}$
in such a way that the dispersion relation
for (\ref{eqpade}) is the [3,2]-Pad\'e expansion of the dispersion
relation of (\ref{eqfull}). For the case of the Klein-Gordon system
(\ref{KG}), this leads to 
\begin{equation}\label{coeffpade}
	{\bf b}=\frac{2\uk}{{\bf v}^2+\uk^2},\qquad 
	{\bf B}=\frac{{\bf v}^2+4\uk^2}{4({\bf v}^2+\uk^2)^2}, \quad 
	{\bf C}=\frac{\uk(3{\bf v}^2+4\uk^2)}{4({\bf v}^2+\uk^2)^{5/2}};
\end{equation}
\begin{figure}\label{figrembsA}
	\includegraphics[width=6cm]{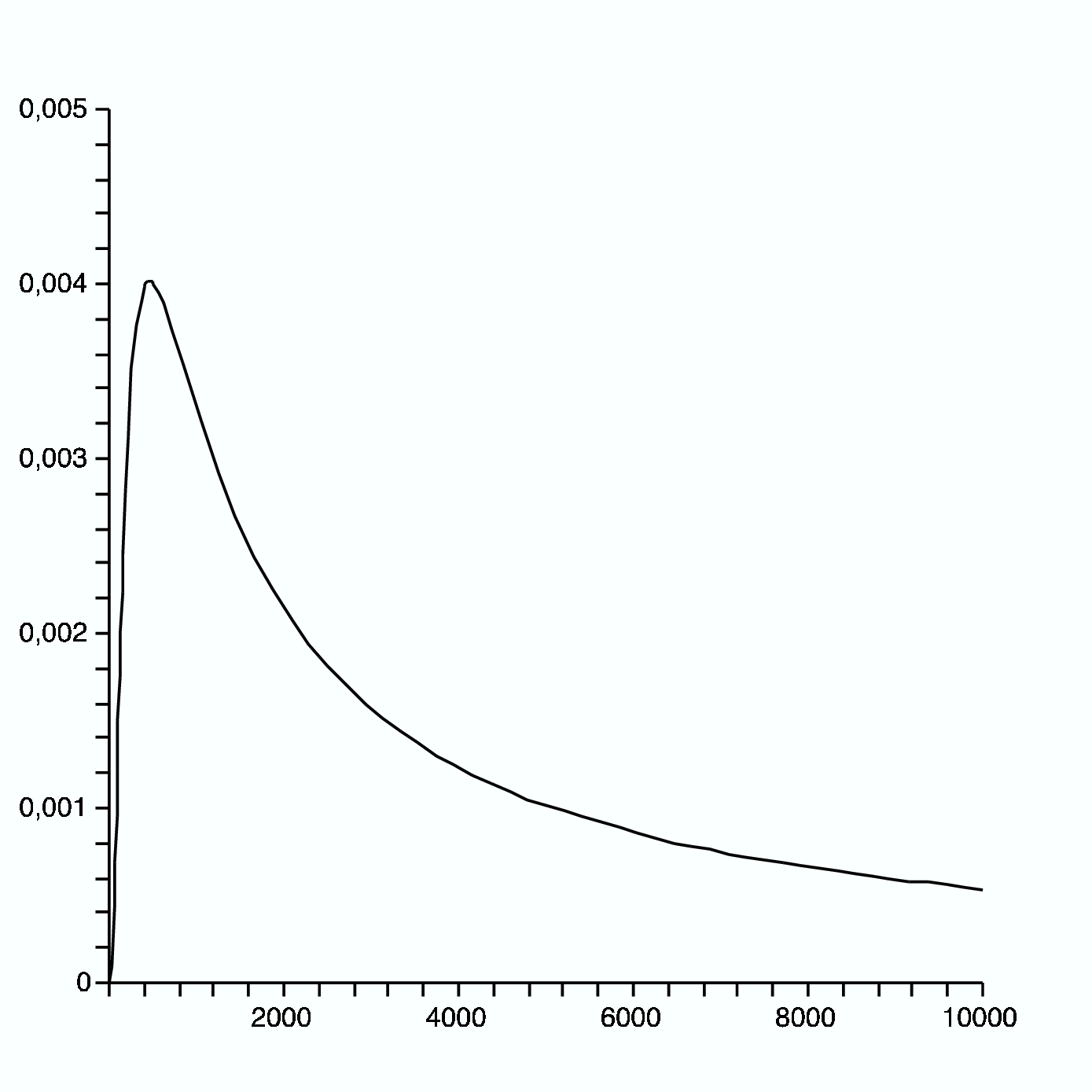}
	\caption{$\frac{{\mathfrak c}_{improved}(\xi)}
	{{\mathfrak c}_{Schrod}(\xi)}$ for $\eps=0.01$ with the coefficients
	(\ref{coeffpade}).}\label{figrembsA}

\end{figure}
we illustrate in Figure \ref{figrembsA} 
how much one gains by working with (\ref{eqpade}) instead of (\ref{eqschrod})
for the Klein-Gordon system (\ref{KG}) with ${\bf v}=\uk=1$.\\

{\bf iv.} The same analysis as is Remark \ref{remschrod}.iii shows that
the approximation provided by (\ref{eqpade}) is of the same
order as the envelope approximation if $\beta^2\geq \vert {\mathfrak c}_{improved}\vert_\infty$.
Since $\vert{\mathfrak c}_{improved}\vert_\infty\ll \vert{\mathfrak c}_{Schrod}\vert_\infty$, this condition is much weaker than the corresponding one
for the usual Schr\"odinger model. In some particular cases, this
condition can even be weaker than the ``practical rule'' $\eps\ll \beta$.
\end{rema}
\begin{proof}
Choosing  $\xi_0\in-\frac{1}{2}B^{-1}{\bf b}$, one can check that 
$$
	1+{\bf b}\cdot\xi+\xi\cdot B\xi=
	1-\frac{1}{4}{\bf b}\cdot(B^{-1}{\bf b})+(\xi-\xi_0)\cdot B(\xi-\xi_0),
$$
so that it follows from assumption (\ref{hyp}) that
$1+{\bf b}\cdot\xi+\xi\cdot B\xi>0$ (uniformly with respect to
$\xi\in\R^d$). The operator $1-\eps i{\bf b}-\eps^2\nabla\cdot B\nabla$
is therefore invertible, and its inverse is the Fourier
multiplier $(1+\eps {\bf b}\cdot D+\eps^2 D\cdot BD)^{-1}$. The equation
(\ref{eqpade}) can therefore be rewritten as
$$
	\begin{array}{l}
	\dsp 
	\dt U_{(3)}
	+i\frac{\nabla\omega_1(\uk)\cdot D+\eps D\cdot (\frac{1}{2}H_{\uk}(\omega_1)+\nabla\omega_1(\uk){\bf b}^T)D
	-\eps^2 {\bf C}(D)}{(1+\eps {\bf b}\cdot D+\eps^2 D\cdot BD)}U_{(3)}
	\vspace{1mm}\\
	\dsp \indent\indent=(1+\eps {\bf b}\cdot D+\eps^2 D\cdot BD)^{-1}\pi_1(\uk)\cT(U_{(3)}).
	\end{array}
$$
Since  $(1+\eps {\bf b}\cdot D+\eps^2 D\cdot BD)^{-1}$ is
regularizing (of order $-2$) and acts on $W(\R^d;\C^n)$ uniformly
with respect to $\eps>0$, the proof of the result follows 
exactly the same lines as the proof of Corollary \ref{coropade} and
we thus omit it.
\end{proof}

\section{Numerical simulations}\label{sectNum}

\subsection{The equations}

This section is devoted to the comparison of solutions of the different 
asymptotic equations derived in Section \ref{sectAs} with the solutions
of the full system (\ref{eq1}). In particular, we want to check
numerically the results proved in Corollaries \ref{corofull}, \ref{coroschrod}
and \ref{coropade}.\\
We consider both short pulses (\S \ref{sectSP}) and chirped pulses
(\S \ref{sectchirp}); for the numerical computations, we use the toy
model (\ref{KG}) with dimension $d=1$ and ${\bf v}=1$ (this is also
the model used in \cite{CGL} for the study of chirped pulses). 
Writing $u=(f,g)^T$, this model reads
\begin{equation}\label{toys}
\left\{ \begin{array}{l}
\dsp\partial_t f+\partial_x g -\frac{g}{\varepsilon}  =-\varepsilon(|f|^2+|g|^2)g,\\
 \dsp\partial_t g+\partial_x f +\frac{f}{\varepsilon}  =\varepsilon(|f|^2+|g|^2)f.
\end{array}\right.
\end{equation}
The initial conditions are taken of the form
\begin{equation}\label{ICtoy}
	f_{\vert_{t=0}}(x)=f^0(x)e^{i\frac{x}{\eps}}+\cc,\qquad
	g_{\vert_{t=0}}(x)=\frac{1-i}{\sqrt{2}}f^0(x)e^{i\frac{x}{\eps}}+\cc
\end{equation}
(which, with the notations of Theorem \ref{th1}, corresponds to $\uk=1$, $b=0$ and $a=\pi_1(\uk)a=(f^0,\frac{1-i}{\sqrt{2}}f^0)^T$).\\
The asymptotic models derived in Section \ref{sectAs} read in this
particular case as follows:
\begin{itemize}
\item \emph{The full dispersion model.} The exact solution to (\ref{toys})
is approximated  by $$u^\varepsilon_{app,1}(t,x)=U_{(1)}(t,x)e^{i\frac{x -\sqrt{2} t}{\varepsilon}}+\cc,$$
where $U_{(1)}=(f_{(1)},g_{(1)})$ solves (recall that $D=-i\partial_x$)
\begin{align}\label{toyfdm}
\dsp \left\{ \begin{array}{l}\partial_t  f_{(1)} +\frac{i}{\varepsilon}\frac{- D+\varepsilon D^2}{\sqrt{1+(1+\varepsilon D)^2}+\sqrt{2}} f_{(1)} =\frac{4i\varepsilon}{\sqrt{2}} |f_{(1)}|^2f_{(1)},\vspace{0.5mm}\\
g_{(1)}= \frac{1-i}{\sqrt{2}}f_{(1)}\;\text{ (polarization condition)}.\end{array}\right.
\end{align}
\item \emph{The nonlinear Schr\"odinger equation.} The exact solution to
(\ref{toys}) is approximated by 
$$u^\varepsilon_{app,2}(t,x)=U_{(2)}(t,x)e^{i\frac{x -\sqrt{2} t}{\varepsilon}}+\cc,$$
where $U_{(2)}=(f_{(2)},g_{(2)})$ solves
\begin{align}\label{toyschro}
\dsp \left\{ \begin{array}{l}\partial_t  f_{(2)} +\frac{1}{\sqrt{2}}\big(\partial_x f_{(2)} -\frac{\eps}{4}\partial_x^2 f_{(2)}\big) =\eps\frac{4i}{\sqrt{2}} |f_{(2)}|^2f_{(2)},\vspace{0.5mm}\\
g_{(2)}= \frac{1-i}{\sqrt{2}}f_{(2)}\;\text{ (polarization condition)}.\end{array}\right.
\end{align}
\item \emph{The nonlinear Schr\"odinger equation with improved dispersion relation.}
We approximate the solution of $\eqref{toys}$ by
$$u^\varepsilon_{app,3}(t,x)=U_{(3)}(t,x)e^{i\frac{x -\sqrt{2}t}{\varepsilon}}+\cc,$$
where $U_{(3)}=(f_{(3)},g_{(3)})$ solves
\begin{align}\label{toypade}
\dsp \left\{ \begin{array}{l}\big(1-i\varepsilon \partial_x -\varepsilon^2 \frac{5}{16}\partial_x^2\big) \partial_t  f_{(3)} +\frac{1}{\sqrt{2}}\big(\partial_x-i\varepsilon \frac{5}{4} \partial_x^2 +\varepsilon^2 \frac{7}{16}\partial_x^3\big)f_{(3)}=\frac{4i\varepsilon}{\sqrt{2}} |f_{(3)}|^2f_{(3)},\vspace{0.5mm}\\
g_{(3)}= \frac{1-i}{\sqrt{2}}f_{(3)}\;\text{ (polarization condition)};\end{array}\right.
\end{align}
note that this modified Schr\"odinger equation corresponds 
to the set of coefficients (\ref{coeffpade}).
\end{itemize}

\subsection{The numerical scheme}
We use a spectral method in space and a splitting technique in time for
all the equations introduced in the previous section. We give here some
details on the numerical scheme used for (\ref{toys});
for  (\ref{toyfdm}), (\ref{toyschro})
and (\ref{toypade}), we use straightforward adaptations of this scheme.\\
Let  us denote by $S_L(t)$ and $S_{NL}(t)$ the evolution operator associated
respectively to the linear and nonlinear part of (\ref{toys}); namely,
$$
	S_L(t)u^0=(f(t),g(t)), 
	\mbox{ with }
	\left\{ \begin{array}{l}
\dsp\partial_t f+\partial_x g -\frac{g}{\varepsilon}  =0\\
 \dsp\partial_t g+\partial_x f +\frac{f}{\varepsilon}  =0
\end{array}\right.
\mbox{ and } 
	(f(0),g(0))=u^0,
$$
and
$$
	S_{NL}(t)u^0=(f(t),g(t)), 
	\mbox{ with }
	\left\{ \begin{array}{l}
\dsp\partial_t f=-\varepsilon(|f|^2+|g|^2)g\\
 \dsp\partial_t g=\varepsilon(|f|^2+|g|^2)g
\end{array}\right.
\mbox{ and } 
	(f(0),g(0))=u^0
$$
(and with periodic boundary conditions).\\
The numerical computation of $S_L(t)$ is made through an FFT-based
spectral method while an explicit integration is used for $S_{NL}(t)$; we
then use a second order splitting scheme to compute $u^{n+1}\sim u((n+1)\Delta t)$ in terms of $u^n\sim u(n\Delta t)$ 
(where $\Delta t$ denotes the time step):
$$
	u^{n+1}=S_L(\frac{\Delta t}{2})
	S_{NL}(\Delta t)S_{L}(\frac{\Delta t}{2})u^n.
$$

\subsection{Numerical results for short pulses}\label{sectSP}
In this section, we are interested in short pulses, 
that is we consider initial conditions for
(\ref{toys}) of  the form (\ref{ICtoy}), with
$$f^0(x)=G(\frac{x-x_0}{\beta}),$$
where $G$ is a smooth function. In the present numerical
computations, the computational domain is $[0,L]$ with $L=30\pi$, and
we take $x_0=15$ and $G(x)=e^{-x^2}$.

The accuracy of the approximations  (\ref{toyfdm}), (\ref{toyschro})
and (\ref{toypade}) is checked using the following quantity:
\begin{equation}\label{err}
	E_{(j)}(\eps,\beta)=\sup_{t\in [0,\frac{1}{\eps}]}\frac{|f(t,\cdot)-(f_{(j)}(t,\cdot)e^{i\frac{kx-\omega t}{\varepsilon}}+\cc)|_{\infty}}{|f(t,\cdot)|_{\infty}},
\end{equation}
where $j=1$ for the full dispersion model (\ref{toyfdm}), $j=2$
for the usual Schr\"odinger approximation (\ref{toyschro}), and
$j=3$ for our new modified Schr\"odinger equation (\ref{toypade}).\\
The exact solution and the difference between the exact solution with the approximation furnished by
the FD, Schr\"odinger and improved Schr\"odinger models
are plotted in Figure \ref{figsolSP} for $\eps=0.01$, $\beta=0.075$ and at time
$T=50$ on the domain $x\in [0,30\pi]$.
\begin{figure}
	\includegraphics[width=4cm]{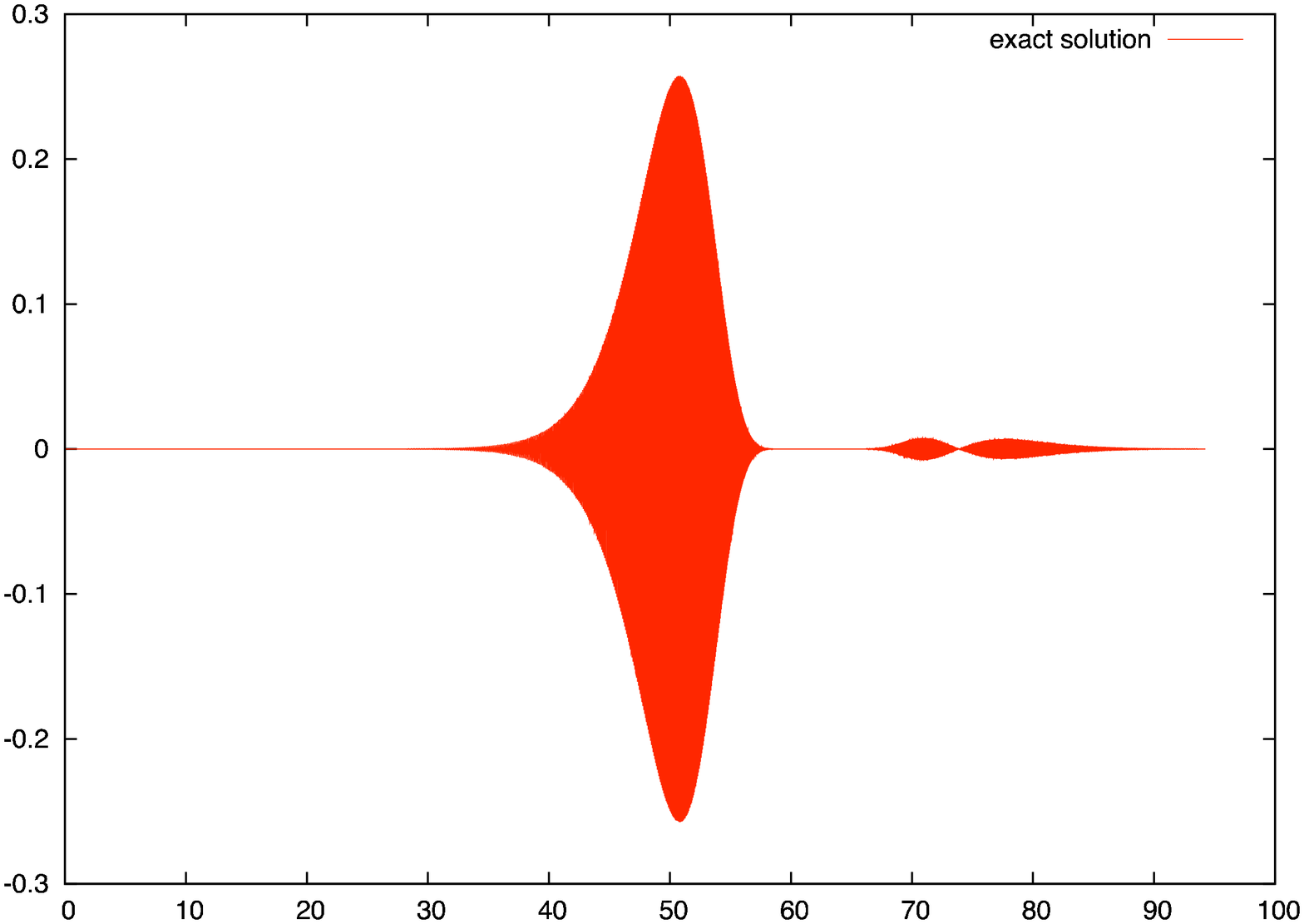}
	\includegraphics[width=4cm]{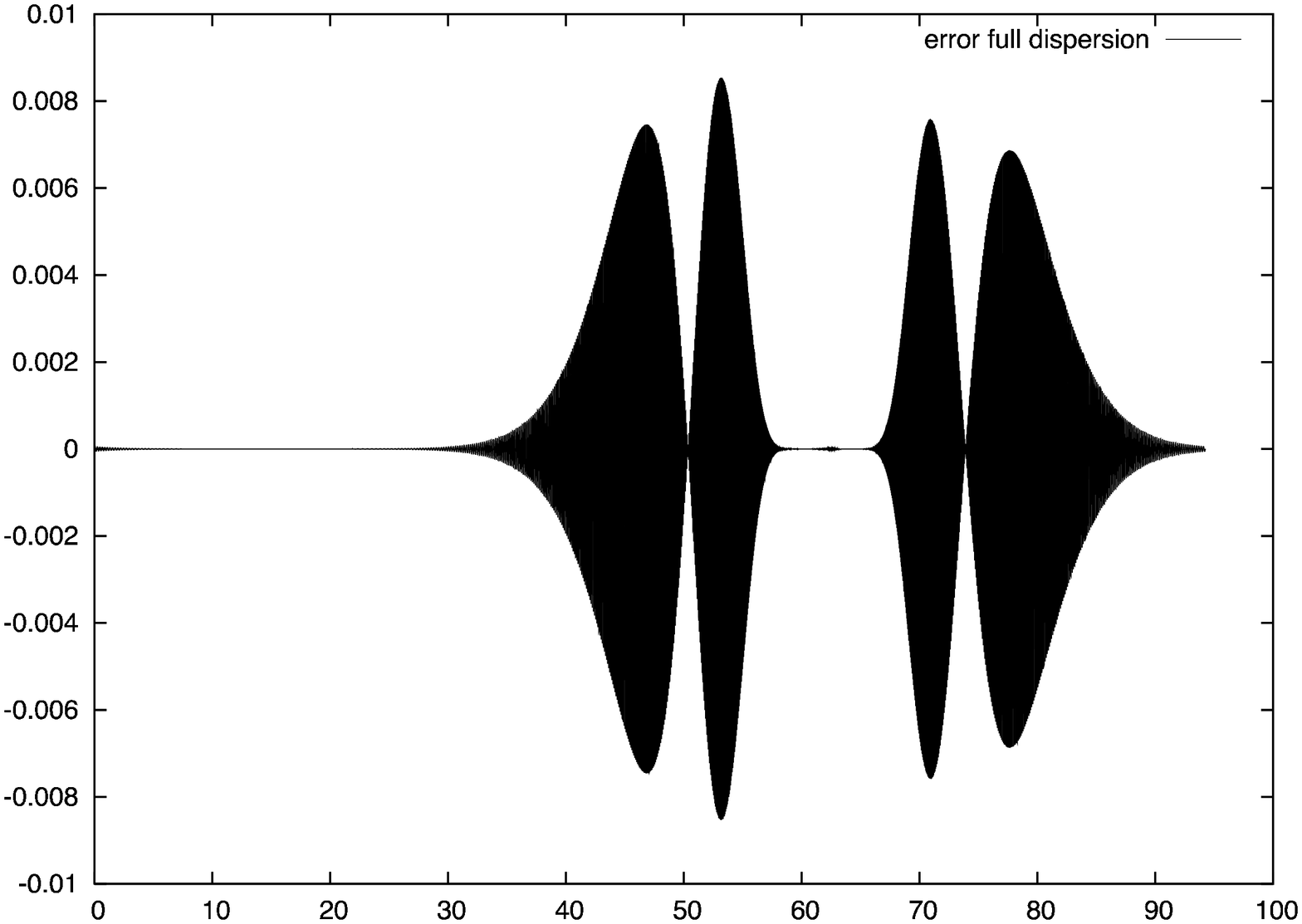}\\
	\includegraphics[width=4cm]{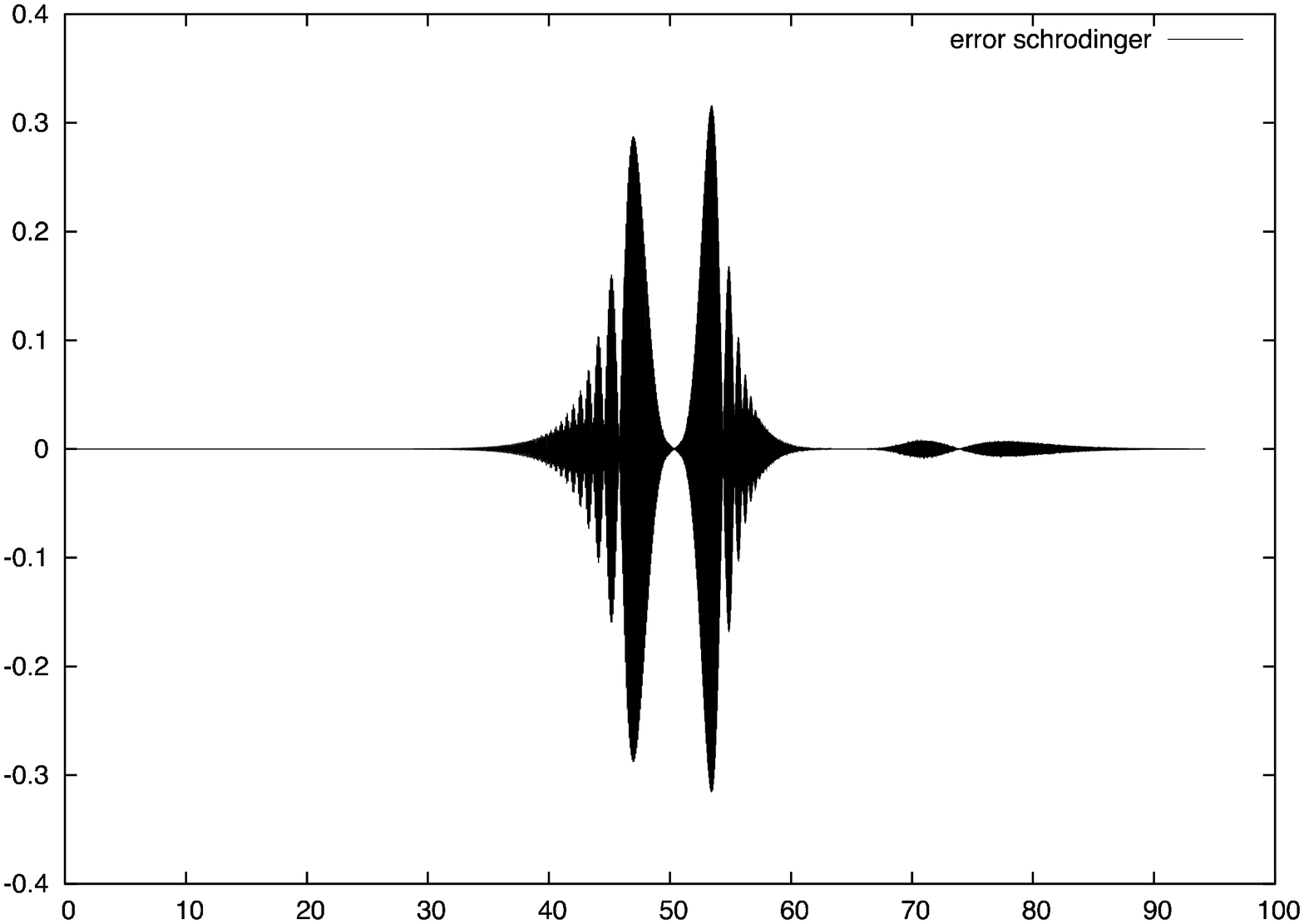}
	\includegraphics[width=4cm]{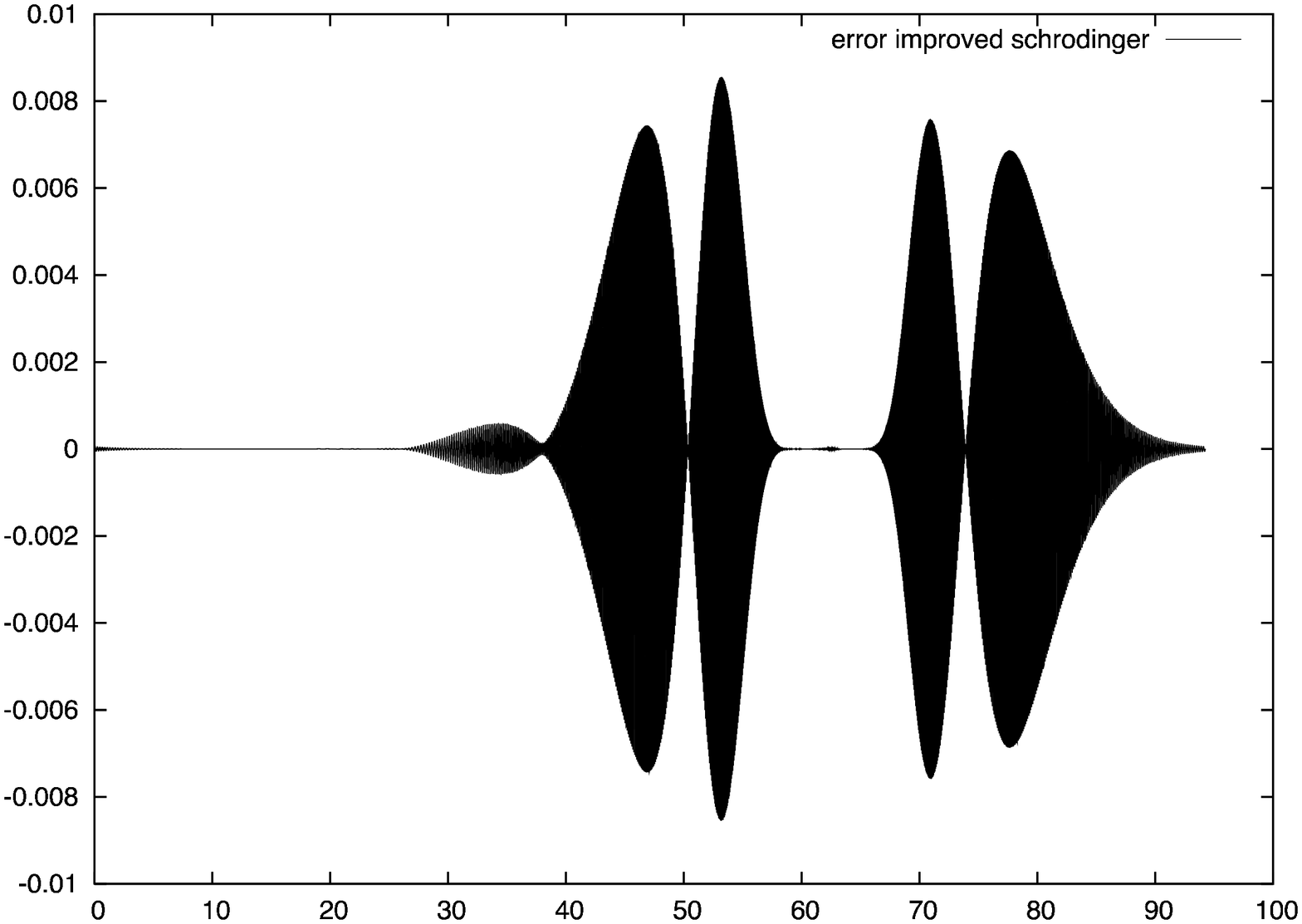}
	\caption{Short pulses: the exact solution, and the difference between the exact solution and 
 the FD, Schr\"odinger and improved Schr\"odinger models
	(from left to right and top to bottom) with $\eps=0.01$, $\beta=0.075$
	and $T=50$.} \label{figsolSP}
\end{figure}
The following computations are also performed
to test the accuracy of the approximate models:\\
$\bullet$ \emph{Test 1:} With $\beta=1$ fixed, we let $\eps$ vary from
$\eps=0.001$ to $\eps=0.1$. This configuration corresponds to usual
wave packets for which the three models should have a comparable
accuracy of $O(\eps)$ when $\eps$ is small enough. One can indeed
observe on Figure \ref{fig1} that the errors $E_{(j)}(\eps,\beta)$ ($j=1,2,3$)
grow linearly with $\eps$. One will also check that when $\eps$ is too large
($\eps\sim 5.10^{-2}$ for a rough precision of $20\%$), none
of the models furnishes a good approximation.
\begin{figure}
	\includegraphics[width=6cm]{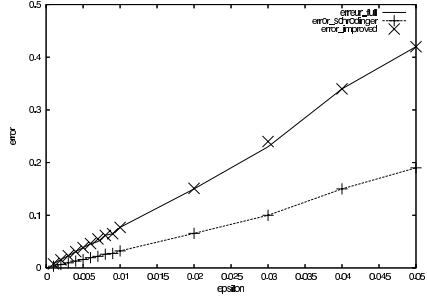}
	\caption{The errors $E_{(j)}(\eps,\beta)$ for $\beta=1$ and
	$\eps\in[0.001,0.1]$; $j=1$ corresponds to FD, $j=2$ to Schr\"odinger and $j=3$ to the improved Schr\"odinger.} \label{fig1}
\end{figure}

\noindent
$\bullet$ \emph{Test 2:} Here, we look at the same configuration as in
Test 1 but with $\beta=0.1$, that is, we investigate here short pulses.
We can observe on Figure \ref{fig2} that the FD and improved Schr\"odinger
models provide a good approximation, but that the usual Schr\"odinger
approximation is completely inaccurate.
\begin{figure}
	\includegraphics[width=6cm]{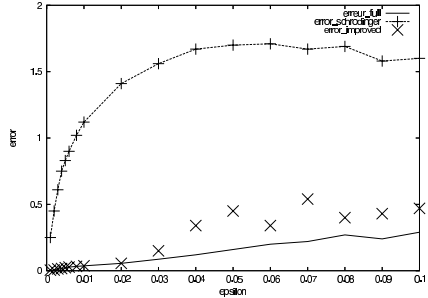}
	\caption{Short pulses: The errors $E_{(j)}(\eps,\beta)$ for $\beta=0.1$ and
	$\eps\in[0.001,0.1]$; $j=1$ corresponds to FD, $j=2$ to Schr\"odinger and $j=3$ to the improved Schr\"odinger.} \label{fig2}
\end{figure}

\noindent
$\bullet$ \emph{Test 3:} Here, $\eps=0.01$ is fixed and we let $\beta$ vary 
from $\beta=0.01$ (short pulses) to $\beta=1$ (wave packets). 
It can be checked that the FD model furnishes a correct approximation for
$\beta\gtrsim 0.03$ and that for such values of $\beta$, the improved Schr\"odinger approximation has the same precision. This is to be contrasted with
the usual Schr\"odinger approximation which is completely inaccurate until
$\beta\sim 0.2$.
\begin{figure}
	\includegraphics[width=6cm]{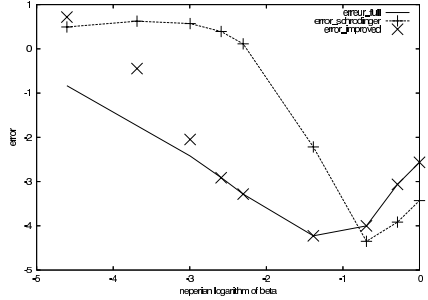}
	\caption{Short pulses: 
	The errors $E_{(j)}(\eps,\beta)$ for $\eps=0.01$ and
	$\beta\in[0.01,1]$; $j=1$ corresponds to FD, $j=2$ to Schr\"odinger and $j=3$ to the improved Schr\"odinger.} \label{fig3}
\end{figure}

\subsection{Numerical results for chirped pulses}\label{sectchirp}

In this section, we are interested in chirped pulses, 
that is we consider initial conditions for
(\ref{toys}) of  the form (\ref{ICtoy}), with
$$f^0(x)=G(x-x_0)\cos(\frac{1}{\beta}\cos(\frac{x-x_0}{\beta})),$$
where $G$ is a smooth function. In the present numerical
computations, the computational domain is $[0,L]$ with $L=30\pi$, and
we take $x_0=15$ and $G(x)=e^{-x^2}$.

The accuracy of the approximations  (\ref{toyfdm}), (\ref{toyschro})
and (\ref{toypade}) is checked using the quantities
$E_{(j)}(\eps,\beta)$ ($j=1,2,3$) defined in (\ref{err}).\\
The exact solution and the diffenrence between the exact solution
and the FD, Schr\"odinger and improved Schr\"odinger models
are plotted in Figure \ref{figsolCP} for $\eps=0.01$, $\beta=0.3$ and at time
$T=1/\eps=100$.
\begin{figure}
	\includegraphics[width=4cm]{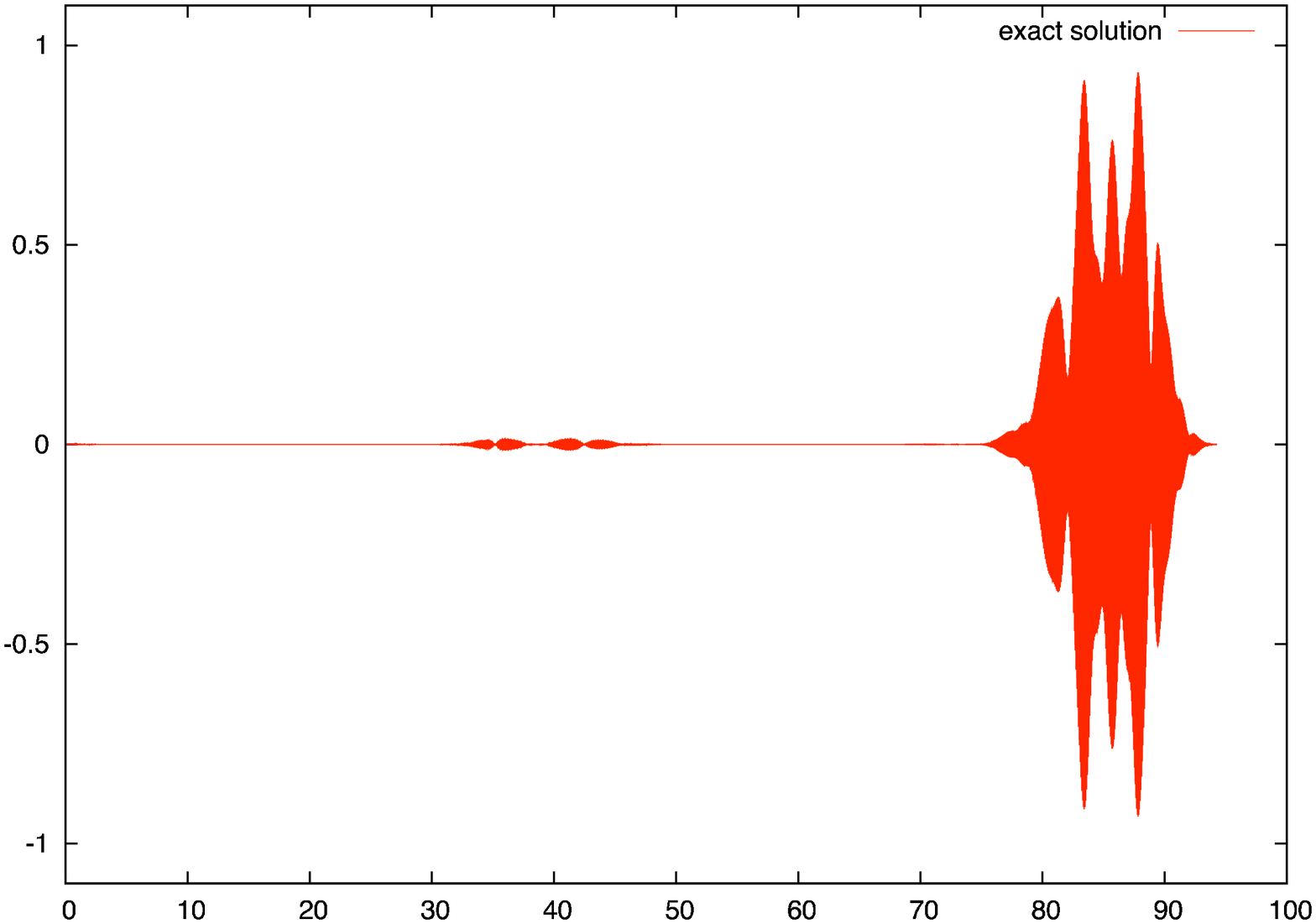}
	\includegraphics[width=4cm]{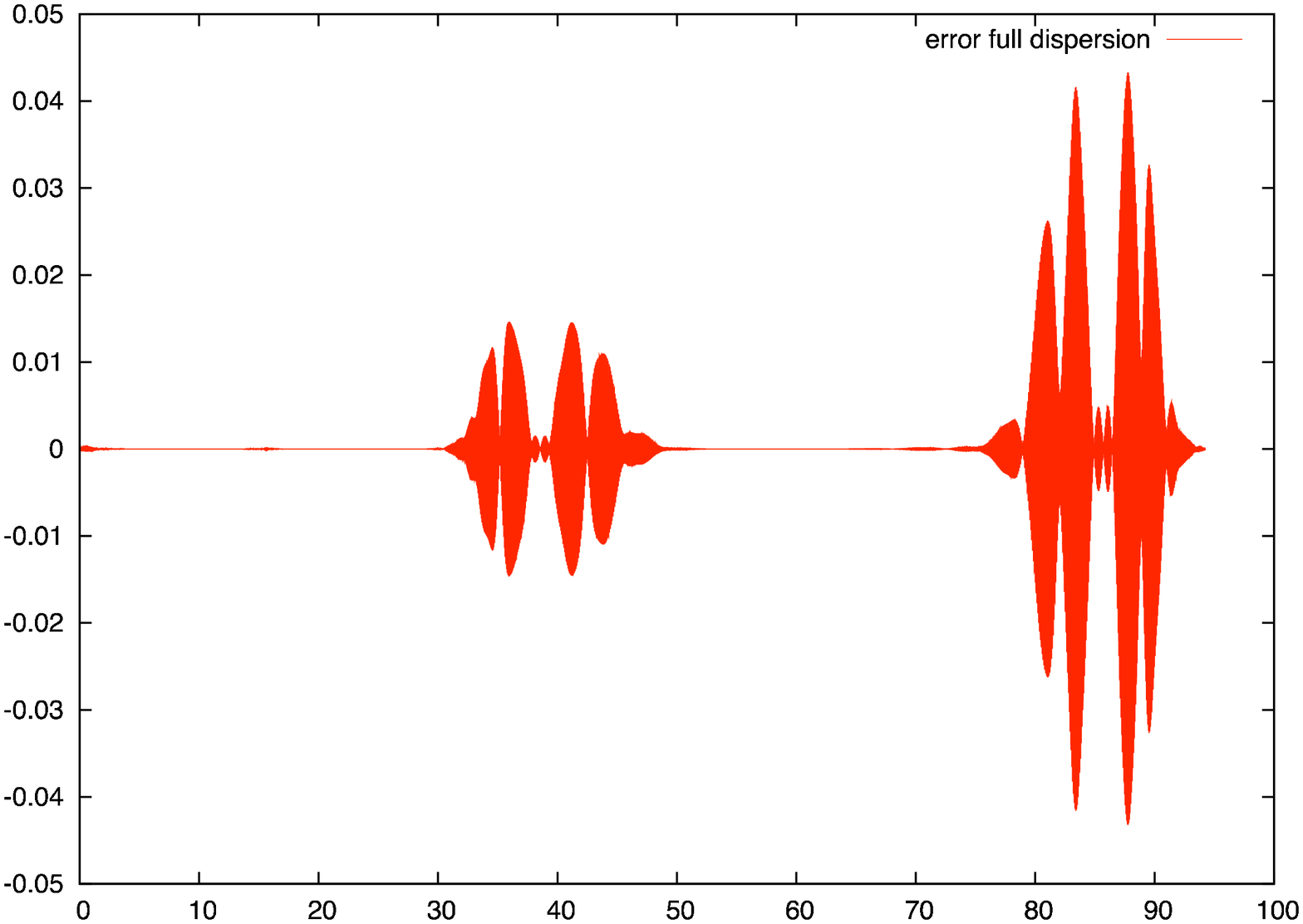}\\
	\includegraphics[width=4cm]{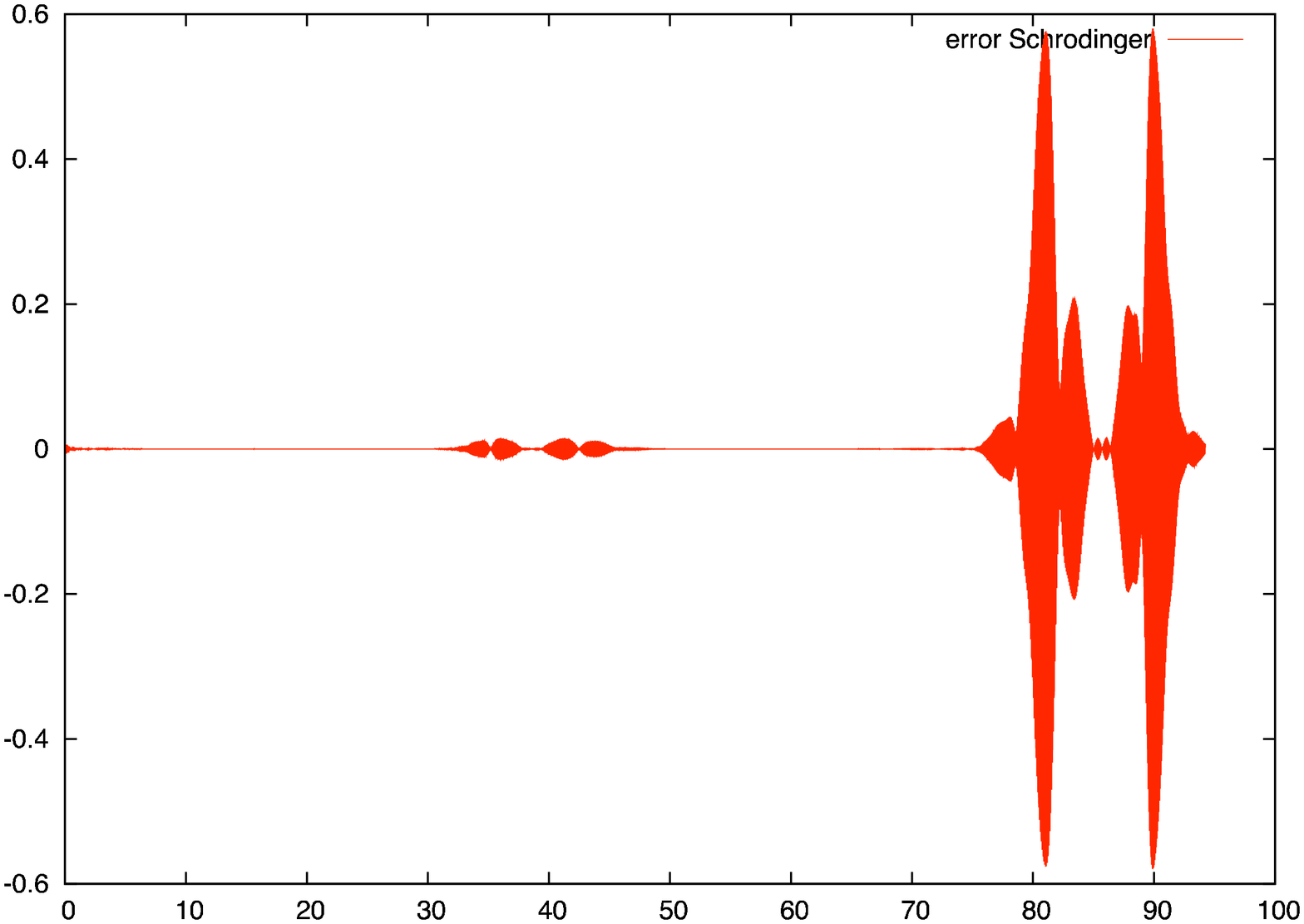}
	\includegraphics[width=4cm]{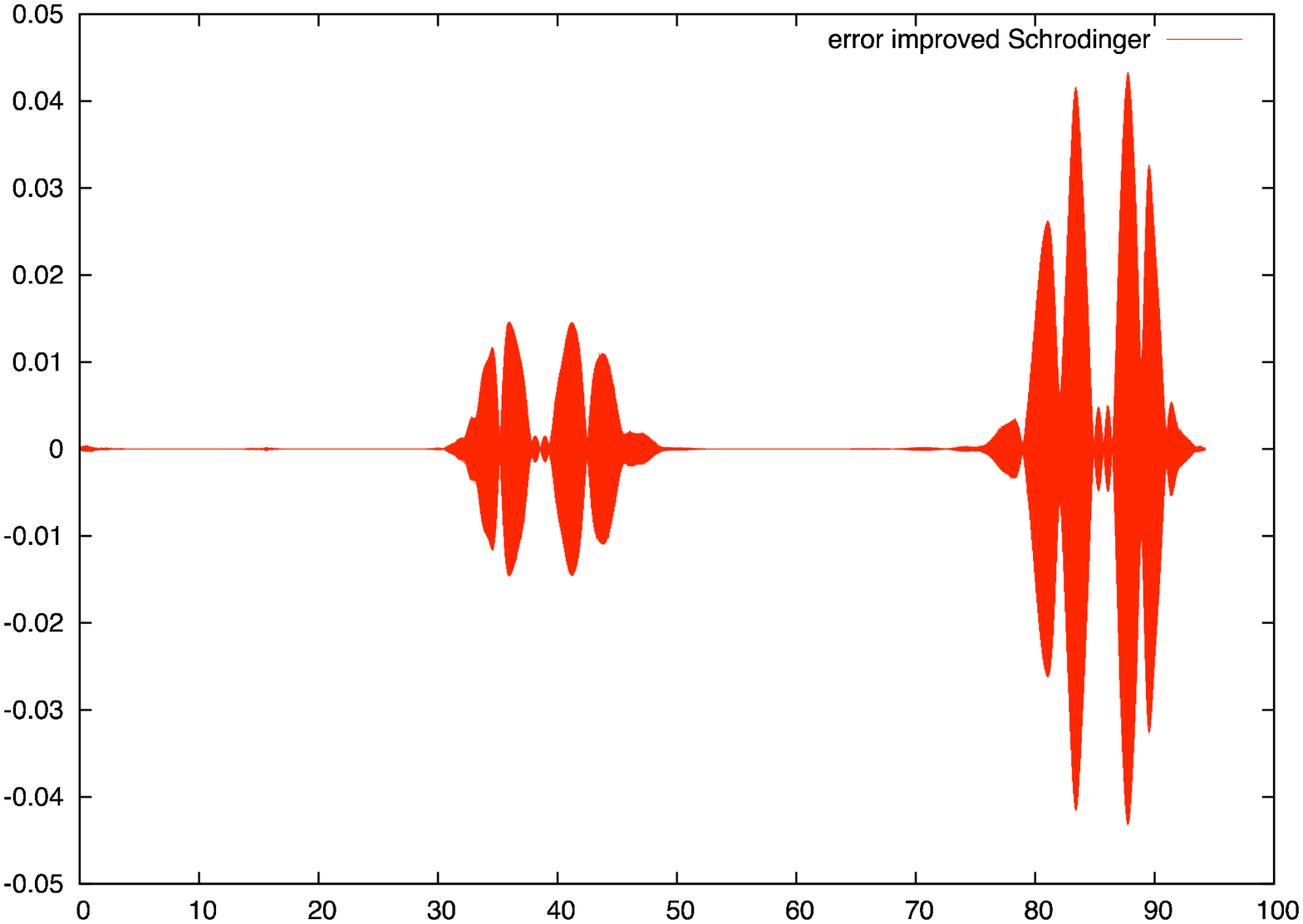}
	\caption{Chirped pulses: the exact solution, and the 
	difference between 
	the exact solution and the 
	FD, Schr\"odinger and improved Schr\"odinger models
	(from left to right and top to bottom) with $\eps=0.01$, $\beta=0.3$
	and $T=100$.} \label{figsolCP}
\end{figure}
The following computations are also performed
to test the accuracy of the approximate models:\\
$\bullet$ \emph{Test 1:} With $\beta=0.1$ fixed, we let $\eps$ vary from
$\eps=0.001$ to $\eps=0.1$. 
We can observe on Figure \ref{fig4} that the FD and the improved Schr\"odinger models are good approximations for $\varepsilon \leq 0.003$. Above this value, the approximation is no longer pertinent. Furthermore, the classical Schr\"odinger model is inapropriate for this range of parameters.
\begin{figure}
	\includegraphics[width=6cm]{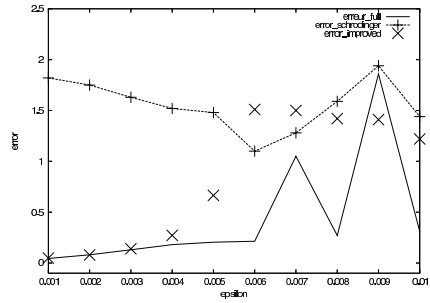}
	\caption{Chirped pulses: The errors $E_{(j)}(\eps,\beta)$ for $\beta=0.01$ and
	$\eps\in[0.001,0.1]$; $j=1$ corresponds to FD, $j=2$ to Schr\"odinger and $j=3$ to the improved Schr\"odinger.} \label{fig4}
\end{figure}

\noindent
$\bullet$ \emph{Test 2:} Here, $\eps=0.01$ is fixed and we let $\beta$ vary 
from $\beta=0.01$ (chirped pulses) to $\beta=1$ (wave packets). We 
observe on Figure \ref{fig5} that both FD and improved Schr\"odinger models become appropriate for $\beta\geq 0.1$ whereas the Schr\"odinger approximation is acceptable for $\beta \geq 0.4$. 
\begin{figure}
	\includegraphics[width=6cm]{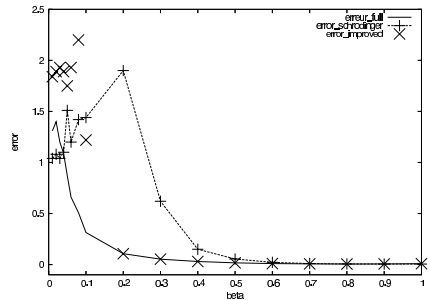}
	\caption{Chirped pulses: The errors $E_{(j)}(\eps,\beta)$ for $\eps=0.01$ and
	$\beta\in[0.01,1]$; $j=1$ corresponds to FD, $j=2$ to Schr\"odinger and $j=3$ to the improved Schr\"odinger.} \label{fig5}
\end{figure}

\end{document}